\newcommand{\X}{\mathbb{X}}
\newcommand{\Y}{\mathbb{Y}}
\newcommand{\R}{\mathbb{R}}
\newtheorem{theorem}{Theorem}[section]
\newtheorem{prop}[theorem]{Proposition}
\newtheorem{cor}[theorem]{Corollary}
\newtheorem{lemma}[theorem]{Lemme}
\newtheorem{remark}[theorem]{Remark}
\newtheorem{definition}[theorem]{Definition}
\title{Dynamics and oscillations of a predator–prey model with modified Leslie–Gower
Holling-type II schemes time-dependent delays}
\author{ \textsl{\ Haifa Ben Fredj\footnote{MaPSFA, ESSTHS, University of Sousse-Tunisia.  
{\bf{Email:}} haifabenfredjh@gmail.com}} \hspace{1cm}
 \and
\textsl{ Farouk Chérif\footnote{ISSATs and MaPSFA (LR11ES35), ESSTHS, University of Sousse-Tunisia. 
{\bf{Email:}} farouk.y.cherif@gmail.com}} 
}
\begin{document}
\maketitle
\thispagestyle{empty}

\begin{abstract}
A predator–prey system is investigated in this research, which is based on a modified version of the Leslie–Gower scheme and a Holling-type II scheme with time-dependent delays. Using Schauder's fixed-point theorem,
we studied the existence  of pseudo almost periodic  solution for the suggested model.  Based on the   suitable
Lyapunov functional, sufficient conditions are established for the  globally attractive pseudo almost periodic solution. At the end, two numerical examples are presented to demonstrate the effectiveness of our results.
\end{abstract}
\textbf{keywords:} Prey-predator model, Pseudo almost periodic solution, continuous delays, Global exponential stability.\\
\textbf{Mathematics Subject Classification (2020)} 34C27, 34D23, 93C43, 47H10.

\section{Introduction}
One of the most attractive and significant issues in mathematical ecology is the connection between prey and predator. The predator–prey interaction is heavily influenced by functional responses. As a result, both mathematicians and ecologists have looked at predator–prey systems with a wide variety of functional responses.
In particulier, Aziz-Alaoui and Daher \cite{aziz2003boundedness} designed and investigated the following predator–prey model with modified Leslie–Gower and Holling-type II:
\begin{equation}\label{Aloui}
\left\{\begin{aligned}
&\dfrac{du}{dt}=\left(a_1-bu-\dfrac{c_1v}{k_1+u}\right)u,\\
&\dfrac{dv}{dt}=\left(a_2-\dfrac{c_2v}{k_2+u}\right)v,
\end{aligned}
\right.
\end{equation}
with initial value $u(0) = u_0 > 0$, $v(0) = v_0 > 0$, where $u(t)$ and $v(t)$ stand for the prey population size and the predator population size respectively, and $a_1$, $a_2$,  $b$, $c_1$, $c_2$, $k_1$ and $k_2$ are all positives with the ecology meaning as follows

\begin{table}[hbtp]
\begin{center}
\begin{tabular}{p{0.7cm}p{12.7cm}}

$a_1$&the growth rate of prey\\
$a_2$&the growth rate of predator\\
$b$&measures the strength of competition among individuals of species $u$\\
$c_1$&is the maximum value which per capita reduction rate of $u$ can attain\\
$c_2$&is the maximum value which per capita reduction rate of $v$ can attain\\
$k_1$&measures the extent to which environment provides protection to prey $u$\\
$k_2$&measures the extent to which environment provides protection to predator $v$
\end{tabular}
\end{center}
\end{table}
This model can treat the interaction prey-predator which based on the following assumptions
\begin{enumerate}
\item the prey growth following the logistic equation (i.e $u'(t)=(a_1 -bu(t))u(t)$ ) in the absence of his predator.
\item the link between the attack rate and predator size describe following the Holling type II $\left(\frac{c_1v}{k_1+u}\right)$ which a predator's rate of prey consumption grows as prey density grows, but finally reaches a plateau (or asymptote) where the rate of consumption remains constant regardless of prey density increases.
\item the Leslie–Gower formula is built on the premise that a predator population's decrease is proportional to the availability of its favorite food per capita. It is $\frac{dv}{dt}=a_2v\left(1-\frac{v}{\alpha u}\right)$, in which the growth of the predator population is of logistic form i.e. $\frac{dv}{dt}=a_2v\left(1-\frac{v}{C}\right)$
.
Here, "$C$" measures the carry capacity set by the environmental resources and is proportional to prey abundance, $C=\alpha u$, 
where is the conversion factor of prey into predator \cite{leslie1948some,leslie1960properties,steinmuller1978pielou}.
In the case of severe scarcity, $v$ can switch over to other populations but its growth will be limited
by the fact that its most favorite food $u$ is not available in abundance. This case can be taken care of by adding a
positive constant k to the denominator, see \cite{ aziz2002study,aziz2003boundedness}.
\end{enumerate}

Many natural and man-made processes in biology, medicine, and other fields now incorporate time-delays, according to current research. Time delays occur so often in nearly every circumstance that ignoring them is ignoring reality. Kuang \cite{kuang1993global} mentioned that animals must take time to digest their food before further activities and responses take place, and hence any model of species dynamics without delays is an approximation at best. It is now beyond question that the influence of time-delay owing to the time necessary to transition from egg to adult stage, gestation duration, and other factors must be considered in a better study. The famous works of Macdonald \cite{macdonald2008biological}, Gopalsamy \cite{gopalsamy1992stability}, and Kuang \cite{kuang1993global} include detailed arguments for the relevance and use of time-delays in realistic models. As a result, the "Ordinary Differential Equation", which is at the heart of Mathematical Biology, should be replaced by the "Delay Differential Equation".\\

Furthermore, the occurrence of almost periodic solutions is one of the most fascinating subjects in qualitative differential equations since they may be used to dynamic of prey-predator system \cite{liao2018almost,menouer2017existence,tripathi2020almost,xu2019global}. An extension of the almost periodic function is the pseudo almost  periodic function. It was defined in \cite{zhang1994pseudo}. 
It is worth noting that due to their potential applicability in a wide range of fields, almost periodic and pseudo almost periodic solutions have received a lot of attention in the last decade \cite{amdouni2018pseudo,bekolle2021attractiveness,chen2016positive}.\\

Roughly speaking, we shall consider  the following differential system of predator-prey model which incorporates the Holling type II and a modified Leslie-Gower functional response:
\begin{equation}\label{prey-predatorII}
\left \{\begin{aligned}
u'(t)&=\left(a_1(t)-b(t)u(t)-\dfrac{c_1(t)v(t-\tau_1(t))}{u(t-\sigma_1(t))+k_1(t)}\right)u(t);\\
v'(t)&=\left(a_2(t)-\dfrac{c_2(t)v(t-\tau_2(t))}{u(t-\sigma_2(t))+k_2(t)}\right)v(t), 
\end{aligned}
\right. 
\end{equation}
where $a_i,b,c_i,k_i,\tau_i,\sigma_i:[0,+\infty[\rightarrow ]0,+\infty[,$ $i=1,2$, are continuous functions. The term {\small$\dfrac{c_2(t)v(t-\tau_2(t))}{u(t-\sigma_2(t))+k_2(t)}$} is of this equation is called the Leslie-Gower
term and the term {\small$\dfrac{c_1(t)u(t)}{u(t-\sigma_1(t))+k_1(t)}$ } is the Holling II functional response. Pose $$r =\underset{t \in \R}{sup} \left(\tau_i(t),\sigma_i(t) ; i=1,2\right).$$

 Denote by $BC ([-r, 0] , \R^2_+$) the set of bounded continuous functions from $[-r, 0]$ to $\R^2_+$. If $z(.)$ is defined on $[-r + t_0, \rho[$ with $t_0, \rho \in \R$, then we define $z_t \in C([-r, 0] , \R^2$) where $z_t(\theta) = z(t + \theta)$ for all $\theta \in [-r, 0]$. Notice that we restrict our selves to $\R^2_+$-valued functions since only non-negative solutions of \eqref{prey-predatorII} are biologically meaningful. So, let us consider the following initial condition
\begin{equation}\label{condition}
 z_{t_0} = \phi,\quad \phi=(\phi_1,\phi_2) \in BC ([-r, 0] , \R^2_+) \text{ and }\phi_1 (0),\phi_2(0) > 0. 
 \end{equation}
We write $z_t (t_0, \phi)$ for a solution of the admissible initial value problem \eqref{prey-predatorII} and \eqref{condition}. Also, let $[t_0, \eta(\phi)[$ be the maximal right-interval of existence of $z_t(t_0, \phi)$.
\section{Preliminaries and definitions}
Throughout this paper, for all functions $f\in BC(\R,\R)$, we note :
$$ f^s=\underset{x\in \R}{sup}|f|\text{ and }\quad f^i=\underset{x\in\R}{inf}|f|.$$

\begin{definition}\cite{chuanyi2003almost}\item
Let $f\in BC(\R,\X)$. $f$ is said almost periodic (a.p) if  for any $ \epsilon>0$, there exists $l_\epsilon >0$, such that
$$  \exists \tau \in [a,a+l_\epsilon[, \forall a\in \R, \quad ||f(x+\tau)-f(x)||_{\X}<\epsilon,$$
As well know $\tau$ is called $\epsilon-period$ of $f$. We denote by $AP(\R,\X))$  the set of such functions.
\end{definition}
 It is well known that the set $AP(\R,\X)$ is a Banach space with the supremum norm:
 $$||f||_{\infty}=\underset{x\in \R}{sup}||f(x)||_{\X}.$$

In the early 1990's, the concept of pseudo almost periodicity (p.a.p) was introduced by Zhang (see \cite{chuanyi2003almost}). It is a  generalization of the almost periodicity.  Define the class of functions $PAP_0 (\R,\X))$ as follows:$$ PAP_0 (\R,\X))=\bigg\{ f \in BC(\R,\X));
\underset{T\rightarrow +\infty }{lim} \int^T_{-T}||f(t)||_{\X} dt=0\bigg\}.$$

\begin{definition}\cite{chuanyi2003almost}\item A function $f \in BC\R,\X))$ is called pseudo almost-periodic if it can be expressed as 
$$f =f_1+f_2, \quad where \quad f_1,f_2\in AP(\R,\X))\times PAP_0(\R,\X)). $$
\end{definition}
 \begin{prop}\cite{chuanyi2003almost} 
1. $(PAP(\R,\X)),||.||_{\infty})$ is Banach space and 
 $$AP(\R,\X))\varsubsetneq PAP(\R,\X))\varsubsetneq BC(\R,\X)).$$
2.For $f\in PAP(\R,\X))$ and $g\in PAP(\R,\R)$ with $inf_{t\in \R}|g(t)|>0$, then $\dfrac{f}{g}\in PAP(\R,\X)).$
 \end{prop}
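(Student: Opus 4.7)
The proposition bundles two standard structural facts about $PAP(\R,\X)$, and the plan splits accordingly.

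For the Banach-space assertion together with the strict inclusions, the crucial structural point is the uniqueness of the decomposition $f = f_1 + f_2$. I would first establish the key lemma $AP(\R,\X) \cap PAP_0(\R,\X) = \{0\}$: if $f$ lies in both, then $t\mapsto\|f(t)\|_{\X}$ is almost periodic, so its Bohr mean exists and equals $\lim_T \frac{1}{2T}\int_{-T}^T \|f(t)\|_{\X}\,dt = 0$; but an almost periodic non-negative function with zero mean must vanish identically, since otherwise almost-periodicity would replicate any positive bump throughout $\R$ via a relatively dense set of translation numbers, forcing a strictly positive mean. Uniqueness of the decomposition then allows me to define the projection $P : f\mapsto f_1$. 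The bound $\|f_1\|_\infty \le \|f\|_\infty$ is obtained by picking, at any $t_0$, $\epsilon$-almost-periods $\tau_n$ of $f_1$ at which $\|f_2(t_0+\tau_n)\|_{\X}$ is simultaneously small (possible because $f_2 \in PAP_0$ cannot be uniformly bounded away from zero on any relatively dense set). With $P$ continuous, $PAP(\R,\X)$ is closed in $BC(\R,\X)$: a Cauchy sequence splits into sequences Cauchy in $AP$ (closed in $BC$) and in $PAP_0$ (closed by passing the limit under the integral), so the limit lies in $PAP$. For the strict inclusions, $t\mapsto 1/(1+t^2)$ is in $PAP_0\setminus AP$, separating $AP$ from $PAP$, while a bounded continuous function with distinct limits at $\pm\infty$, say $t\mapsto \arctan(t)$, cannot be $PAP$ (the only candidate for its $AP$ part is a constant, and the remainder then fails to lie in $PAP_0$), which separates $PAP$ from $BC$.

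For the quotient assertion, the strategy is first to show that $PAP$ is stable under pointwise product, then to show that $g\in PAP(\R,\R)$ with $\inf|g|>0$ implies $1/g\in PAP(\R,\R)$, and finally to write $f/g = f\cdot(1/g)$. The product rule is a direct four-term expansion: with $f=f_1+f_2$ and $g=g_1+g_2$ one has $fg = f_1g_1 + f_1g_2 + f_2g_1 + f_2g_2$, where $f_1g_1\in AP$ by classical theory, and each of the remaining three summands lies in $PAP_0$ because a bounded factor times a $PAP_0$ factor still has vanishing mean modulus.

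The main obstacle is showing that the almost periodic part $g_1$ inherits the estimate $\inf_t|g_1(t)|>0$. I would argue by contradiction. Suppose $\inf|g_1|=0$; fix $\epsilon>0$ and $t_0$ with $|g_1(t_0)|<\epsilon$. By almost-periodicity of $g_1$, every interval of length $\ell_\epsilon$ contains an $\epsilon$-translation number $\tau$ with $|g_1(t_0+\tau)-g_1(t_0)|<\epsilon$, hence $|g_1(t_0+\tau)|<2\epsilon$; thus the set $E_\epsilon:=\{t:|g_1(t)|<2\epsilon\}$ is relatively dense and has positive lower density. On $E_\epsilon$ we have $|g_2(t)|\ge|g(t)|-|g_1(t)|\ge\inf|g|-2\epsilon$, so for $\epsilon$ small enough $|g_2|$ is bounded below by a positive constant on a set of positive density, which forces $\liminf_T\frac{1}{2T}\int_{-T}^T|g_2(t)|\,dt>0$ and contradicts $g_2\in PAP_0$. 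With $\inf|g_1|>0$ secured, classical AP theory gives $1/g_1\in AP$, and the identity
\begin{equation*}
\frac{1}{g} \;=\; \frac{1}{g_1} \;-\; \frac{g_2}{g\,g_1}
\end{equation*}
writes $1/g$ as the sum of an $AP$ function and a bounded multiple of a $PAP_0$ function, whence $1/g\in PAP(\R,\R)$. The conclusion $f/g\in PAP(\R,\X)$ follows from the module property established at the outset.
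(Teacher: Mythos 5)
The paper itself gives no proof of this proposition: it is imported verbatim from the cited monograph of Zhang, so there is nothing in the text to compare your argument against. Your proposal is, in outline, exactly the standard proof from that source: uniqueness of the decomposition via $AP\cap PAP_0=\{0\}$, the norm estimate $\|f_1\|_\infty\le\|f\|_\infty$ making the projection continuous and $PAP$ closed in $BC$, explicit witnesses for the two strict inclusions, closure of $PAP$ under products, the lower bound $\inf|g_1|>0$ inherited by the almost periodic part, and the identity $1/g=1/g_1-g_2/(gg_1)$. All of these steps are the right ones, and the two examples ($x_0/(1+t^2)$ and $\arctan$) do separate the three spaces. (Incidentally, you are implicitly using the correct normalized definition of $PAP_0$ with the factor $\tfrac{1}{2T}$; the paper's displayed definition omits it, which would collapse $PAP_0$ to $\{0\}$, so your reading is the right one.)

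The one point that needs repair is a justification you invoke twice: that a $PAP_0$ function ``cannot be uniformly bounded away from zero on any relatively dense set,'' and, in the $\inf|g_1|>0$ step, that the relatively dense set $E_\epsilon=\{t:|g_1(t)|<2\epsilon\}$ automatically ``has positive lower density.'' As stated, the first claim is false: a relatively dense set of points can be Lebesgue-null (e.g.\ $\Z$), and one can build $h\in PAP_0$ with $h\equiv 1$ on $\Z$ via spikes of shrinking width, so being bounded below on a relatively dense set contradicts nothing. What saves both steps is a thickening argument that you should make explicit: the translation numbers $\tau$ of the almost periodic part work uniformly in $t$, and the almost periodic part is uniformly continuous, so around the fixed base point $t_0$ one gets an interval $(t_0-\eta,t_0+\eta)$ on which the relevant lower bound persists, and hence $\|f_2\|\ge\delta$ (respectively $|g_2|\ge\inf|g|-3\epsilon$) on the union $\bigcup_\tau(t_0+\tau-\eta,t_0+\tau+\eta)$, a relatively dense union of intervals of fixed length $2\eta$. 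Such a set genuinely has positive lower density, which is what forces $\liminf_T\frac{1}{2T}\int_{-T}^{T}\|f_2\|>0$ and yields the contradiction. Note that the thickening must use continuity of the $AP$ part at $t_0$ (not of $f_2$, which is merely continuous, not uniformly so). With this adjustment both the norm estimate and the $\inf|g_1|>0$ step are sound, and the rest of your argument goes through.
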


\begin{definition} (Definition 2.12, \cite{chuanyi2003almost}) \item  Let $\Omega \subseteq Y$. An continuous function f : $\R \times\Omega \longrightarrow \X$  is called pseudo almost periodic (p.a.p).
in t uniformly with respect $x \in \Omega$ if the two following conditions are satisfied :\\
i) $\forall x \in  \Omega$, $f(., x) \in PAP(\R,\X)$,\\
ii) for all compact $K$ of $\Omega$, $\forall \epsilon> 0, \exists \delta > 0, \forall t \in \mathbb{R}, \forall x_1, x_2 \in K$,
\begin{center}
$||x_1 - x_2||_{\Y} \leq \delta \Rightarrow  ||f(t, x_1) - f(t, x_2)||_{\X} \leq \epsilon$.
\end{center}
Denote by $PAP_U(\R\times \Omega; \X)$  the set of all such functions.
\end{definition}
\section{Positivity and Bounded of the  solution}

\begin{theorem}\label{positive}
Let $(u,v) \in \R^2$ solution of system  \eqref{prey-predatorII}. If the the initial condition \eqref{condition} is satisfied, then the solution $(u,v)$ is strictly positive. 
\end{theorem}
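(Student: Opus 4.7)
The plan is to use the classical \emph{exponential representation} trick for scalar linear equations with continuous coefficients. Both equations of \eqref{prey-predatorII} have the factorized form $u'(t)=u(t)\,F_1(t)$ and $v'(t)=v(t)\,F_2(t)$, where
\[
F_1(t)=a_1(t)-b(t)u(t)-\frac{c_1(t)v(t-\tau_1(t))}{u(t-\sigma_1(t))+k_1(t)},\quad
F_2(t)=a_2(t)-\frac{c_2(t)v(t-\tau_2(t))}{u(t-\sigma_2(t))+k_2(t)}.
\]
The idea is to integrate on $[t_0,t]$ to obtain, formally,
\[
u(t)=\phi_1(0)\exp\!\left(\int_{t_0}^{t}F_1(s)\,ds\right),\qquad
v(t)=\phi_2(0)\exp\!\left(\int_{t_0}^{t}F_2(s)\,ds\right).
\]
Since $\phi_1(0)>0$ and $\phi_2(0)>0$ by \eqref{condition}, the strict positivity will follow immediately from the positivity of the exponential.

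To justify the above integral representations rigorously I would run a continuity/continuation argument. Set
\[
t^{\ast}=\inf\{\,t\in[t_0,\eta(\phi))\ :\ u(t)=0\text{ or }v(t)=0\,\}.
\]
On $[t_0,t^{\ast})$, both $u$ and $v$ are positive by definition. Because $k_1,k_2$ are strictly positive continuous functions and $\phi_1\ge 0$ on $[-r,0]$, the denominators $u(s-\sigma_i(s))+k_i(s)$ remain bounded below by $k_i^{i}>0$ for all $s\in[t_0,t^{\ast})$ (the delayed value $u(s-\sigma_i(s))$ is either the non-negative initial data $\phi_1$ or an already-established positive value of $u$). Hence $F_1$ and $F_2$ are continuous on $[t_0,t^{\ast})$, so that the above exponential formulas are valid on this interval, which forces $u(t)>0$ and $v(t)>0$ there.

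It then remains to rule out $t^{\ast}<\eta(\phi)$. If such a finite $t^{\ast}$ existed, passing to the limit $t\to t^{\ast-}$ in the exponential representations would give $u(t^{\ast})=\phi_1(0)\exp(\int_{t_0}^{t^{\ast}}F_1)>0$ and similarly $v(t^{\ast})>0$ (the integrals are finite because $F_1,F_2$ are bounded on the compact sub-interval $[t_0,t^{\ast}]$ thanks to continuity of $a_i,b,c_i,k_i,\tau_i,\sigma_i$ and boundedness of $u,v$ on compacts), contradicting the definition of $t^{\ast}$. Therefore $t^{\ast}=\eta(\phi)$ and strict positivity holds on the whole maximal interval $[t_0,\eta(\phi))$.

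The only mildly technical point — and the step I expect to require the most care — is handling the delayed terms $v(s-\tau_1(s))$ and $u(s-\sigma_i(s))$ inside $F_1,F_2$: one must make sure that the argument $s-\tau_i(s)$ (resp. $s-\sigma_i(s)$) lies either in the initial data window $[-r+t_0,t_0]$, where positivity is provided by the initial condition and the lower bound $k_i^{i}$, or in the already-treated segment $[t_0,s]\subset[t_0,t^{\ast})$. Once this bookkeeping is done, no further estimates are needed and the proof is essentially a single-line computation.
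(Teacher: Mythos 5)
Your proposal follows essentially the same route as the paper: both integrate the factorized equations to obtain the exponential representations $u(t)=\phi_1(0)\exp\bigl(\int_{t_0}^{t}F_1(s)\,ds\bigr)$ and $v(t)=\phi_2(0)\exp\bigl(\int_{t_0}^{t}F_2(s)\,ds\bigr)$ and conclude positivity from $\phi_1(0),\phi_2(0)>0$. The only difference is that you spell out the continuation argument (the first-zero time $t^{\ast}$ and the bookkeeping for the delayed arguments) that the paper leaves implicit, which makes your version slightly more rigorous but not a different proof.
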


\begin{proof}
By integration from $t_0$ into t of the system \eqref{prey-predatorII}, we have
\begin{align*}
\displaystyle u(t)&=\phi_1(0)exp\left( \int_{t_0}^t a_1(s)-b(s)u(s)-\frac{c_1(s)v(s-\tau_1(s))}{u(s-\sigma_1(s))+k_1(s)}ds\right) ,\\
\displaystyle v(t)&=\phi_2(0)exp\left(\int_{t_0}^t a_2(s)-\frac{c_2(s)v(s-\tau_2(s))}{u(s-\sigma_2(s))+k_2(s)}ds\right).
\end{align*}
then it is clear that the solution $(u,v)$ has the same sign as the initial condition \eqref{condition}. Hence, the solution is strictly positive. 
\end{proof}
\begin{definition}\cite{lu2015permanence}\item
We will say that  the solution $(u,v)$ of \eqref{prey-predatorII} is :\begin{enumerate}
\item permanent if there
exists $M_1,M_2 > 0$ such that 
$$0 < \liminf_{t\rightarrow +\infty}
u(t) \leq \limsup_{t\rightarrow +\infty}
u(t) \leq M_1\quad\& \quad 0 < \liminf_{t\rightarrow +\infty}
v(t) \leq \limsup_{t\rightarrow +\infty}
v(t) \leq M_2$$
\item is  uniformly permanent if there exists $M_1 > m_1> 0$ and $M_2 > m_2> 0$ such that
$$m_1 \geq \liminf_{t\rightarrow +\infty}
u(t) \leq \limsup_{t\rightarrow +\infty}
u(t) \leq M_1\quad\& \quad m_2 < \liminf_{t\rightarrow +\infty}
v(t) \leq \limsup_{t\rightarrow +\infty}
v(t) \leq M_2$$

\end{enumerate}
\end{definition}

\begin{lemma}\label{lem1} \cite{chen2007note}	Let $a>0,b>0$.
\begin{enumerate}
\item[1.] If $dx(t)/dt\geq x(b-ax)$, then $\lim  inf_{t\rightarrow +\infty}x(t)\geq b/a$ for $t\geq 0$ and $x(0)>0;$
\item[2.] If $dx(t)/dt\leq x(b-ax)$, then $\lim sup_{t\rightarrow +\infty}x(t)\leq b/a$ for $t\geq 0$ and $x(0)>0.$
\end{enumerate} 
\end{lemma}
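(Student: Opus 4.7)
\bigskip

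\noindent\textbf{Proof plan for Lemma \ref{lem1}.}

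The plan is to compare $x(t)$ with the solution $y(t)$ of the scalar logistic equation
\begin{equation*}
y'(t)=y(t)\bigl(b-ay(t)\bigr),\qquad y(0)=x(0)>0,
\end{equation*}
whose explicit Bernoulli-type solution
\begin{equation*}
y(t)=\frac{b/a}{1+\left(\tfrac{b/a-y(0)}{y(0)}\right)e^{-bt}}
\end{equation*}
is globally defined, strictly positive, and satisfies $\lim_{t\to+\infty}y(t)=b/a$ regardless of the sign of $y(0)-b/a$. Once this is in hand, both statements reduce to a standard differential inequality comparison.

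First I would check that $x(t)>0$ on the whole half-line in both cases: since $x(0)>0$ and the right-hand side of each inequality factors through $x$, the function $x$ cannot reach zero in finite time (integrating the inequality yields $x(t)$ as $x(0)$ times an exponential of a locally bounded quantity, as in the proof of Theorem \ref{positive}). With positivity in hand, the change of variables $w(t)=1/x(t)$ linearises the problem: for Part 2, $x'\leq x(b-ax)$ becomes $w'\geq a-bw$, while for Part 1 the inequality reverses to $w'\leq a-bw$. Each of these is a scalar linear differential inequality which can be compared with the associated linear ODE $z'=a-bz$, whose solution $z(t)=a/b+(z(0)-a/b)e^{-bt}$ tends to $a/b$.

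For Part 2, Gronwall-type comparison gives $w(t)\geq z(t)$ when $w(0)=z(0)=1/x(0)$, whence $\liminf_{t\to\infty}w(t)\geq a/b$ and therefore $\limsup_{t\to\infty}x(t)\leq b/a$. For Part 1 the comparison is reversed, yielding $\limsup w(t)\leq a/b$ and hence $\liminf x(t)\geq b/a$. Alternatively, and equivalently, one can work directly with $x-y$: at any hypothetical first instant where $x(t)=y(t)$ the derivatives satisfy $x'\geq y'$ (resp.\ $x'\leq y'$), forcing the ordering $x\geq y$ (resp.\ $x\leq y$) to persist, and then the known limit of $y$ yields the conclusion.

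The only delicate step is the rigorous invocation of the comparison principle with a non-strict inequality: one typically perturbs the comparison ODE by $\varepsilon>0$ (for instance replacing $b$ by $b\pm\varepsilon$) so that the inequality becomes strict, applies the maximum-principle style argument to get strict ordering, and then lets $\varepsilon\to 0$. This is standard but is the place where a careless argument can fail; apart from this, every computation is routine, and the asymptotic limit $b/a$ is dictated entirely by the explicit form of the logistic solution.
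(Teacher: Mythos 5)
The paper does not prove this lemma at all: it is imported verbatim from Chen, Li and Huang \cite{chen2007note}, so there is no in-paper argument to compare yours against. Judged on its own, your sketch is the standard comparison argument for the logistic inequality and the overall structure (explicit Bernoulli solution, linearisation via $w=1/x$, Gronwall-type comparison, $\varepsilon$-perturbation to handle the non-strict inequality) is sound and would yield the stated conclusions.

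There is one step that does not survive scrutiny as written: your positivity claim in Part 2. A one-sided differential \emph{inequality} $x'\leq x(b-ax)$ cannot be integrated to $x(t)=x(0)\exp(\cdots)$; it only gives the upper bound $x(t)\leq x(0)\exp(\int(b-ax))$, and it places no lower bound on $x'$ whatsoever, so nothing prevents $x$ from reaching zero in finite time (for instance $x'=-1$ satisfies the inequality for all small positive $x$). Consequently the substitution $w=1/x$ is not justified for Part 2 unless positivity of $x$ is taken as a standing hypothesis --- which is how the lemma is actually used in this paper, since $u$ and $v$ are already known to be strictly positive by Theorem \ref{positive}. Alternatively, Part 2 can be closed without positivity by arguing directly with $x$: if $x(t_0)\leq b/a$ at some $t_0$ then $x$ can never exceed $b/a$ afterwards (at a last crossing time the inequality forces $x$ to decrease), while if $x(t)>b/a$ for all $t$ then $y=x-b/a$ satisfies $y'\leq -axy\leq -by$, so $y\to 0$ exponentially. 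Part 1 has no such issue, since $x'\geq x(b-ax)>0$ whenever $0<x<b/a$, which both preserves positivity and drives $x$ up toward $b/a$. With that repair (or with positivity assumed), your proof is correct.
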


\begin{theorem}\label{uniformpermanent}[Uniform permanent]
\item If 
$$(C0)\qquad a_1^ik_1^i-M_2 c_1^s>0$$
holds, then  any  positive solution $(u(t),v(t))^T$ of differential system \eqref{prey-predatorII} satisfies 
$$\begin{aligned}
&m_1\leq\text{ } \lim inf_{t\rightarrow +\infty} u(t)\leq\text{  }\lim  sup_{t\rightarrow +\infty} u(t)\leq  M_1  ;\\
&m_2 \leq \text{ } \lim inf_{t\rightarrow +\infty} v(t)\leq \text{  } \lim sup_{t\rightarrow +\infty} v(t)\leq M_2.
\end{aligned}$$ 
where \[\left\{\begin{aligned}
 M_1:=\dfrac{a_1^s}{b^i}, \quad & m_1:=\dfrac{a_1^ik_1^i-M_2  c_1^s}{b^s k^i_1}\\
 M_2:=\dfrac{a^s_2(M_1+k_2^s) exp(a^s_2 \tau_2^s)}{c_2^i},\quad & m_2:=\dfrac{a_2^i(m_1+k_2^i)}{c_2^sexp\bigg(\dfrac{c_2^s M_2\tau_2^s}{k_2^i+m_1}\bigg)}
\end{aligned}
\right.\]
\end{theorem}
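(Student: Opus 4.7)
The argument is a four–step chase on the upper/lower limits, applying the differential inequality Lemma \ref{lem1} at each stage and using the delay only through exponential comparison.

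First I would establish the upper bound $\limsup u(t)\le M_1$. Since $b(t),c_1(t),k_1(t)$ and all delayed terms are non-negative, the prey equation yields $u'(t)\le (a_1^s-b^i u(t))u(t)$, and Lemma \ref{lem1}(2) gives $\limsup u(t)\le a_1^s/b^i=M_1$. Next I would get $\limsup v(t)\le M_2$. Dropping the Leslie--Gower sink gives $v'(t)\le a_2^s v(t)$, hence for any $t$ large enough, $v(t-\tau_2(t))\ge v(t)e^{-a_2^s\tau_2^s}$. Combined with $u(t-\sigma_2(t))\le M_1+\varepsilon$ (so that $u(t-\sigma_2(t))+k_2(t)\le M_1+k_2^s+\varepsilon$), the predator equation then reads
\[
v'(t)\le v(t)\Bigl(a_2^s-\frac{c_2^i\,e^{-a_2^s\tau_2^s}}{M_1+k_2^s+\varepsilon}\,v(t)\Bigr),
\]
and Lemma \ref{lem1}(2) combined with $\varepsilon\to 0$ yields $\limsup v(t)\le a_2^s(M_1+k_2^s)e^{a_2^s\tau_2^s}/c_2^i=M_2$.

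Then I would establish $\liminf u(t)\ge m_1$. Since $u(t-\sigma_1(t))+k_1(t)\ge k_1^i$ and $v(t-\tau_1(t))\le M_2+\varepsilon$ for large $t$, the prey equation gives
\[
u'(t)\ge u(t)\Bigl(a_1^i-\frac{c_1^s(M_2+\varepsilon)}{k_1^i}-b^s u(t)\Bigr).
\]
Condition (C0) rewrites as $a_1^i-c_1^s M_2/k_1^i>0$, so Lemma \ref{lem1}(1) and $\varepsilon\to 0$ give $\liminf u(t)\ge (a_1^i k_1^i-c_1^s M_2)/(b^s k_1^i)=m_1$. Finally, for the lower bound on $v$, I would use the already established bounds $u(t)\ge m_1-\varepsilon$ and $v(t)\le M_2+\varepsilon$ for large $t$; then the predator equation supplies the rough lower estimate $v'(t)\ge -\bigl(c_2^s M_2/(m_1+k_2^i)\bigr)v(t)$, so $v(t-\tau_2(t))\le v(t)\exp\!\bigl(c_2^s M_2\tau_2^s/(m_1+k_2^i)\bigr)$. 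Reinserting this into the predator equation yields
\[
v'(t)\ge v(t)\Bigl(a_2^i-\frac{c_2^s\exp\bigl(c_2^s M_2\tau_2^s/(m_1+k_2^i)\bigr)}{m_1+k_2^i}\,v(t)\Bigr),
\]
and a last application of Lemma \ref{lem1}(1) delivers $\liminf v(t)\ge m_2$.

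The main obstacle is the delay: the bounds on $v(t\pm\tau)$ cannot be deduced directly from those on $v(t)$, so I must derive one-sided exponential comparisons ($v'\le a_2^s v$ in Step 2, $v'\ge -Kv$ with $K=c_2^s M_2/(m_1+k_2^i)$ in Step 4) and use them to trade $v(t-\tau_2(t))$ for a multiple of $v(t)$. One must also be careful with the order of the four steps, since the upper bound of $u$ feeds the upper bound of $v$, which feeds the lower bound of $u$ via (C0), which in turn feeds the lower bound of $v$; the $\varepsilon$–arguments have to be executed sequentially with each $\varepsilon\to 0$ passage justified by the preceding step.
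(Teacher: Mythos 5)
Your proposal is correct and follows essentially the same four-step chase as the paper: bound $\limsup u$ by $M_1$, trade $v(t-\tau_2(t))$ for $v(t)e^{-a_2^s\tau_2^s}$ via the one-sided comparison $v'\le a_2^sv$ to get $M_2$, use (C0) for $m_1$, and reverse the exponential comparison for $m_2$. Your explicit $\varepsilon$-bookkeeping for the ``for $t$ large enough'' passages is in fact slightly more careful than the paper's own write-up.
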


\begin{proof}
 
Let $(u(t),v(t))^T$ be any positive solution of system \eqref{prey-predatorII}. It follows from the first equation system \eqref{prey-predatorII} that
\begin{equation}\label{du}
\dfrac{du(t)}{dt}\leq u(t)\big [a_1^s-b^iu(t)\big]; 
\end{equation}
From Lemma \ref{lem1}, we get
\begin{equation}\label{M1}
\limsup_{t\rightarrow +\infty} u(t)\leq \dfrac{a_1^s}{b^i}:=M_1.
\end{equation}
From the second equation in system \eqref{prey-predatorII}, we have
\begin{equation}\label{dv}
\dfrac{dv(t)}{dt}\leq a_2^s v(t).
\end{equation}
By integrating \eqref{dv} from $t-\tau_2(t)$ to $t$, we have
\begin{equation}\label{v}
v(t)\leq v(t-\tau_2(t))e^{a_2^s \tau_2(t)}\leq v(t-\tau_2(t))e^{a_2^s \tau_2^s}.
\end{equation}
Then,
\begin{equation}\label{v(tau)}
v(t-\tau_2(t))\geq v(t) exp(-a_2^s \tau_2^s).
\end{equation}
By \eqref{v(tau)} and the second equation of system \eqref{prey-predatorII}
\begin{equation}\label{d2v} 
\dfrac{dv(t)}{dt}\leq  v(t)\bigg[a_2^s-\dfrac{c_2^iexp\left(-a_2^s \tau_2^s\right)}{M_1+k_2^s} v(t) \bigg].
\end{equation}
From lemma \ref{lem1}, we get 
\begin{equation}\label{M2}
v(t)\leq \dfrac{a^s_2(M_1+k_2^s)}{c_2^ie^{-a_2^s \tau_2^s}}:=M_2.  
\end{equation}
By positivity of the solution $(u,v)$ and from \eqref{M2}, we have
\begin{equation}\label{d2u}
\dfrac{du(t)}{dt}\geq u(t)\big [a_1^i-\dfrac{M_2c_1^s}{k_1^i}-b_1^su(t)\big].\end{equation}
From lemma \ref{lem1}, we obtain
$$\liminf_{t\rightarrow +\infty}u(t)\geq \dfrac{a_1^ik_1^i-M_2 c_1^s}{b_1^s k^i_1}:=m_1.$$

By integrating  second inequality of system \eqref{prey-predatorII} from $t$ to $t-\tau_2(t)$, we have 
\begin{equation}
v(t)\geq v(t-\tau_2(t))exp\bigg(-\dfrac{c_2^s M_2}{k_2^i+m_1}\tau_2(t)\bigg).
\end{equation}
Then,
\begin{equation}
 v(t-\tau_2(t))\leq v(t)exp\bigg(\dfrac{c_2^s M_2\tau_2(t)}{k_2^i+m_1}\bigg)\leq v(t)exp\bigg(\dfrac{c_2^s M_2\tau_2^s}{k_2^i+m_1}\bigg).
\end{equation}
On the other hand, from the $2^{sd}$ equation of  \eqref{prey-predatorII}, one has
\begin{equation}\label{ddv}
\dfrac{dv(t)}{dt}\geq v(t)\bigg[a_2^i-\dfrac{c_2^sexp\bigg(\dfrac{c_2^s M_2\tau_2^s}{k_2^i+m_1}\bigg)}{m_1+k_2^i}v(t)\bigg].\end{equation}
which yields 
$$\liminf_{t\rightarrow +\infty}v(t)\geq \dfrac{a_2^i(m_1+k_2^i)}{c_2^sexp\bigg({\dfrac{c_2^s M_2\tau_2^s}{k_2^i+m_1}}\bigg)}:=m_2.$$
\end{proof}

\textbf{Remark:} If the condition (C0) is not satisfied, every solution $(u,v)$ of system \eqref{prey-predatorII} is permanent. So, at this case we consider $m_1=0.$

\section{Existence  the p.a.p solution}
\hspace{1cm} We will offer here adequate criteria that assure the existence  of the pseudo almost periodic solution of \eqref{prey-predatorII}, as stated in the introduction. The following lemmas will be stated in order to show this conclusion.
\begin{lemma} (Theorem 2.17,\cite{cieutat2010composition}).\label{NymetskiiPAP} If $f\in PAP_U(\R\times  \X;\X)$ and for each bounded subset B of $\X$, f is bounded
on $\R\times B$, then the Nymetskii operator
$$N_f : PAP(\R,\X) )\rightarrow PAP(\R,\X)) \text{ with }N_f (u) = f(.; u(.))$$
is well defined.
\end{lemma}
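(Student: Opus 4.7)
The plan is to prove the composition theorem by the classical decomposition strategy that separates the almost periodic part from the ergodic part of both $f$ and $u$. Write $f=g+h$ with $g\in AP_U(\R\times\X,\X)$ and $h\in PAP_{0,U}(\R\times\X,\X)$, and decompose $u=\alpha+\beta$ with $\alpha\in AP(\R,\X)$ and $\beta\in PAP_0(\R,\X)$. Then insert and subtract $g(\cdot,\alpha(\cdot))$ to obtain
\begin{equation*}
f(t,u(t)) = g(t,\alpha(t)) + \bigl[g(t,u(t))-g(t,\alpha(t))\bigr] + h(t,u(t)).
\end{equation*}
The strategy is to show that the first term is almost periodic, while the two bracketed summands belong to $PAP_0(\R,\X)$, so that the total lies in $PAP(\R,\X)$.

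First I would handle $t\mapsto g(t,\alpha(t))$ using the classical Bochner composition theorem for almost periodic functions. Since $\alpha\in AP(\R,\X)$, its range is relatively compact in $\X$; together with the uniform continuity property of $g$ on compacts (part of the definition of $PAP_U$) and the fact that $g$ is a.p. in $t$ uniformly in $x\in\Omega$, one can extract common $\epsilon$-periods $\tau$ such that $\|g(t+\tau,\alpha(t+\tau))-g(t,\alpha(t))\|$ is controlled by the sum of an $\epsilon$-period estimate for $g$ and a uniform continuity estimate using an $\epsilon$-period of $\alpha$. This gives $g(\cdot,\alpha(\cdot))\in AP(\R,\X)$.

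Next I would show the difference $g(t,u(t))-g(t,\alpha(t))$ is in $PAP_0(\R,\X)$. Since $u$ is bounded, there exists a bounded set $K$ containing the ranges of both $u$ and $\alpha$; on $\R\times\overline K$ the function $g$ is uniformly continuous in $x$ uniformly in $t$, so for every $\epsilon>0$ there is $\delta>0$ with $\|g(t,u(t))-g(t,\alpha(t))\|<\epsilon$ whenever $\|u(t)-\alpha(t)\|=\|\beta(t)\|<\delta$. Splitting the mean $\frac{1}{2T}\int_{-T}^{T}\|g(t,u(t))-g(t,\alpha(t))\|\,dt$ according to whether $\|\beta(t)\|<\delta$ or not, and using that the set where $\|\beta(t)\|\geq\delta$ has zero mean density (an easy consequence of $\beta\in PAP_0$ and its boundedness), one gets the mean arbitrarily small.

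The main obstacle is the third piece $h(t,u(t))$: here we need the ergodic property of $h$ to persist after composition with the non-compact-valued function $u$. The idea is to exploit the precompactness of $\overline{\alpha(\R)}$: given $\epsilon>0$, cover $\overline{\alpha(\R)}$ by finitely many balls of radius $\delta$ centered at $x_1,\ldots,x_N$ with $\delta$ chosen from the uniform continuity of $h$ on $\R\times K$. Partition $\R$ into measurable layers $E_i=\{t:\alpha(t)\in B(x_i,\delta)\}$ and write
\begin{equation*}
\|h(t,u(t))\|\leq \|h(t,u(t))-h(t,x_i)\| + \|h(t,x_i)\| \quad\text{on } E_i.
\end{equation*}
The first term is bounded by $\epsilon$ on $E_i$ (after further splitting by whether $\|\beta(t)\|<\delta$, handled as in the previous paragraph with the $PAP_0$-density of $\beta$), while each $h(\cdot,x_i)\in PAP_0(\R,\X)$ by hypothesis. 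Averaging over $[-T,T]$, summing the finitely many contributions and letting $T\to\infty$ then $\epsilon\to0$ gives $h(\cdot,u(\cdot))\in PAP_0(\R,\X)$, which completes the proof. The delicate point throughout is to invoke boundedness of $f$ on $\R\times B$ for bounded $B$ so that all the integrands we truncate remain uniformly controlled.
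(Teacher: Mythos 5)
The paper itself offers no proof of this lemma: it is imported as Theorem 2.17 of the cited work of Cieutat, Fatajou and N'Gu\'er\'ekata, so there is no internal argument to compare against. Your proposal follows what is essentially the standard (and the reference's) route: decompose $f=g+h$ and $u=\alpha+\beta$, handle $g(\cdot,\alpha(\cdot))$ by Bochner's composition criterion using the relative compactness of $\alpha(\R)$, and push the remaining pieces into $PAP_0(\R,\X)$ by combining the Chebyshev estimate showing that $\{t\in[-T,T]:\|\beta(t)\|\ge\delta\}$ has vanishing relative measure, a finite $\delta$-net of the compact set $\overline{\alpha(\R)}$, and the hypothesis that $f$ is bounded on $\R\times B$ for bounded $B$. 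These are exactly the right ingredients and they are assembled in the right order.

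Two steps need tightening before the argument is complete under the hypotheses as stated. First, Definition 2.12 only provides uniform continuity of $f$ in $x$, uniformly in $t$, on \emph{compact} subsets of $\X$, whereas your second and third paragraphs invoke it on the closure of a merely \emph{bounded} set $K$ containing the ranges of $u$ and $\alpha$; this is legitimate only when $\X$ is finite-dimensional. The repair is to work exclusively with the compact set $\overline{\alpha(\R)}$ and to upgrade the two-points-in-$K$ form of uniform continuity to a one-point-in-$K$ form: for every compact $K$ and $\epsilon>0$ there is $\delta>0$ such that $\|f(t,x)-f(t,y)\|\le\epsilon$ whenever $x\in K$, $y\in\X$ and $\|x-y\|\le\delta$ (a routine compactness-and-contradiction argument). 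That upgraded form is precisely what you need when comparing $f$ at $u(t)$ with $f$ at $\alpha(t)$ or at a net point $x_i$, since only $\alpha(t)$ and $x_i$ are known to lie in a compact set. Second, the decomposition $f=g+h$ with $g$ almost periodic in $t$ uniformly on compacts and $h$ uniformly ergodic is asserted but is not free from Definition 2.12, which only posits $f(\cdot,x)\in PAP(\R,\X)$ for each fixed $x$ together with compact uniform continuity; you must either verify that the pointwise decompositions $f(\cdot,x)=g(\cdot,x)+h(\cdot,x)$ inherit the uniformity properties, or reorganize the splitting so that only $h(\cdot,x_i)\in PAP_0(\R,\X)$ for the finitely many net points $x_i$ is used --- which your covering argument in fact permits. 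With these two repairs the proof is correct.
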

\begin{lemma}\label{composepap}
 Let $\psi,\tau \in PAP(\R,\R))$.  Then $\psi(.-\tau(.)) \in PAP(\R,\R))$. 
\end{lemma}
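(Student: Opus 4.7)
The plan is to apply the Nemytskii operator theorem (Lemma \ref{NymetskiiPAP}) to the auxiliary function
\[
f : \R \times \R \longrightarrow \R, \qquad f(t,s) := \psi(t - s),
\]
with composing argument $u(t) := \tau(t)$. Since $\tau \in PAP(\R, \R)$ by hypothesis, once $f$ is shown to lie in $PAP_U(\R \times \R, \R)$ and to be bounded on $\R \times B$ for every bounded $B \subset \R$, the lemma immediately yields $N_f(\tau)(t) = \psi(t - \tau(t)) \in PAP(\R,\R)$, which is exactly the claim.

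Checking condition (i) of the $PAP_U$ definition reduces to translation-invariance of $PAP(\R,\R)$: for fixed $s$, decompose $\psi = \psi_1 + \psi_2$ with $\psi_1 \in AP(\R,\R)$ and $\psi_2 \in PAP_0(\R,\R)$. The shift $\psi_1(\cdot - s)$ is again almost periodic (the $AP$ definition is clearly translation-invariant), and $\psi_2(\cdot - s) \in PAP_0$ follows from the substitution $u = t - s$ together with the estimate
\[
\left| \int_{-T}^{T} |\psi_2(t-s)|\, dt - \int_{-T}^{T} |\psi_2(u)|\, du \right| \le 2|s|\,\|\psi_2\|_{\infty},
\]
which vanishes after division by $2T$. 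Checking condition (ii) amounts to producing, for each compact $K \subset \R$ and each $\epsilon > 0$, a $\delta > 0$ such that $|\psi(t - s_1) - \psi(t - s_2)| < \epsilon$ whenever $s_1, s_2 \in K$ with $|s_1 - s_2| < \delta$, uniformly in $t \in \R$; this is precisely uniform continuity of $\psi$ on $\R$. Boundedness of $f$ on $\R \times B$ is immediate from $\|\psi\|_{\infty} < \infty$.

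The only delicate point is condition (ii): a $PAP$ function is a priori only bounded and continuous, not uniformly continuous, so some additional regularity on $\psi$ is implicitly required. In all later uses of this lemma in the paper, $\psi$ will be either a bounded differentiable coefficient or a bounded solution of \eqref{prey-predatorII} whose derivative is controlled by the system itself, hence globally Lipschitz, so the hypothesis is automatic. A self-contained fallback that avoids invoking uniform continuity of all of $\psi$ is the decomposition
\[
\psi(t - \tau(t)) = \psi_1(t - \tau_1(t)) + \bigl[\psi_1(t - \tau(t)) - \psi_1(t - \tau_1(t))\bigr] + \psi_2(t - \tau(t)),
\]
handling the first summand by common $\epsilon$-almost periods of $\psi_1, \tau_1$ combined with the (automatic) uniform continuity of $\psi_1$, the middle summand by a Chebyshev-type estimate on the set $\{t : |\tau_2(t)| \ge \delta\}$ using $\tau_2 \in PAP_0$, and the last summand by a further application of Lemma \ref{NymetskiiPAP} to the translates of $\psi_2$; the uniform continuity of $\psi_2$ is then the residual hypothesis needed to close the argument.
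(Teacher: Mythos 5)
Your proposal follows exactly the same route as the paper: set $h(t,z)=\psi(t-z)$, verify the two conditions of the $PAP_U$ definition plus boundedness of $h$ on $\R\times B$, and invoke the Nemytskii composition result (Lemma \ref{NymetskiiPAP}) with $z=\tau(t)$. Condition (i) via translation invariance of $PAP(\R,\R)$, and boundedness via $\|\psi\|_\infty<\infty$, are handled identically in both arguments.

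The value of your write-up is that you name the genuine gap that the paper's own proof silently steps over. Condition (ii) for $h(t,z)=\psi(t-z)$ --- for every compact $K$ and every $\epsilon>0$ there is $\delta>0$ with $|z_1-z_2|\le\delta\Rightarrow|\psi(t-z_1)-\psi(t-z_2)|\le\epsilon$ for all $t\in\R$ --- is, as you observe, precisely uniform continuity of $\psi$ on all of $\R$. The paper asserts this ``since the numerical application $\psi$ is continuous'', which is not enough: membership in $PAP(\R,\R)\subset BC(\R,\R)$ gives boundedness and continuity only, and while the $AP$ component of $\psi$ is automatically uniformly continuous, the $PAP_0$ component need not be. So neither the paper's proof nor your main line establishes the lemma for arbitrary $\psi\in PAP(\R,\R)$; your fallback decomposition is a sensible repair strategy but, as you concede, still leaves uniform continuity of $\psi_2$ as a residual hypothesis. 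This matters because the Remark immediately following the lemma claims this is the first proof that dispenses with assumptions such as uniform continuity of $\psi$ or $\inf_{t}(1-\tau'(t))>0$ --- a claim the argument does not actually deliver. For the uses made of the lemma later in the paper the gap is harmless, since there $\psi$ is a bounded solution of the system, hence has bounded derivative and is Lipschitz; but the lemma should then be stated with uniform continuity of $\psi$ as an explicit hypothesis rather than for all of $PAP(\R,\R)$.
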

\begin{proof}
let  $h(t,z)=\psi(t-z)$,  since the numerical application $\psi$ is continuous function  and the space $PAP(\R,\R)$ is a translation invariant then \begin{enumerate}
\item[i.] for any $z\in \R$, the function $h(.,z) \in PAP(\R? \R)$. 
\item[ ii.] for all compact K of $\R$, $\forall \epsilon> 0, \exists \delta > 0, \forall t \in \mathbb{R}, \forall z_1, z_2 \in K$,\end{enumerate}
\begin{center}
$|z_1 - z_2| \leq \delta \Rightarrow  |h(t, z_1) - h(t, z_2)| \leq \epsilon$.
\end{center}
Furthermore $\psi$ is bounded $(PAP(\R,\R)\subset BC(\R,\R))$, then $h$ is bounded on $\R\times B $ where $B$ is bounded interval. 
By the lemma \ref{NymetskiiPAP}, the Nymetskii operator
\begin{eqnarray*}
 N_f : PAP(\R,\R)&\longrightarrow & PAP(\R,\R)
 \\\tau_i &\longmapsto &h(.,\tau(.))\end{eqnarray*}
is well defined for ${\displaystyle \tau \in  PAP(\R,\mathbb{R})}$. Consequently, ${\displaystyle \bigg[t\longmapsto\psi(t-\tau(t))\bigg]\in  PAP(\R,\mathbb{R})}.$
\end{proof}
\begin{remark}
In addition, many authors suppose that $\phi$ is bounded or uniformly continuous or $inf_{t\in \R}(1-\tau'(t))>0$ for given the result. So this is the first one give the proof for any $\phi,\tau \in PAP(\R,\R)$, we have $\phi(t-\tau(t))\in PAP(\R,\R)$.
\end{remark}

 \begin{lemma}\label{AA}\item
If $a,b:t\in \R\rightarrow \R$ continuous functions, then 
\begin{align}e^{-\int^t_s a(u+\alpha)du}-e^{-\int^t_s b(u)du}=\int^t_s e^{-\int^t_r a(u+\alpha)du}.e^{-\int^r_s b(u)du}(b(r)-a(r+\alpha))dr, \ \forall \alpha\in \R.
\end{align}
\end{lemma}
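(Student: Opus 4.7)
The plan is to prove this by constructing an auxiliary function whose boundary values recover the left-hand side and whose derivative recovers the integrand on the right-hand side; it is essentially an application of the fundamental theorem of calculus combined with the product rule.

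First, I would introduce the function
\[
F(r)=e^{-\int^t_r a(u+\alpha)\,du}\cdot e^{-\int^r_s b(u)\,du},\qquad r\in[s,t].
\]
The endpoint evaluations give $F(s)=e^{-\int^t_s a(u+\alpha)\,du}\cdot 1$ and $F(t)=1\cdot e^{-\int^t_s b(u)\,du}$, so the left-hand side of the claimed identity equals exactly $F(s)-F(t)$. Hence it suffices to show that
\[
F(t)-F(s)=\int^t_s e^{-\int^t_r a(u+\alpha)\,du}\cdot e^{-\int^r_s b(u)\,du}\bigl(a(r+\alpha)-b(r)\bigr)\,dr,
\]
and then reverse the sign.

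Next, I would differentiate $F$ using the product rule together with the Leibniz rule for the two exponents. Since $\frac{d}{dr}\int_r^t a(u+\alpha)\,du=-a(r+\alpha)$ and $\frac{d}{dr}\int_s^r b(u)\,du=b(r)$, continuity of $a$ and $b$ ensures both factors of $F$ are differentiable, and a routine computation gives
\[
F'(r)=\bigl(a(r+\alpha)-b(r)\bigr)\,e^{-\int^t_r a(u+\alpha)\,du}\,e^{-\int^r_s b(u)\,du}.
\]
Because $F'$ is continuous on $[s,t]$, the fundamental theorem of calculus yields $F(t)-F(s)=\int_s^t F'(r)\,dr$, which is precisely the displayed identity above.

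Finally, rewriting $F(t)-F(s)=-(F(s)-F(t))$ and factoring $-1$ inside the integrand converts $a(r+\alpha)-b(r)$ into $b(r)-a(r+\alpha)$, producing exactly the identity of the lemma. The only point requiring mild care is keeping track of signs coming from the two Leibniz differentiations, but there is no real obstacle; the statement holds for arbitrary continuous $a,b$ and any $\alpha\in\mathbb{R}$, with no extra regularity needed.
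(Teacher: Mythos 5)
Your proof is correct. It is essentially the paper's argument in a more direct form: the paper sets $h(r,s)=g_a(r,s)-g_b(r,s)$, derives the linear ODE $d_rh(r,s)=-a(r+\alpha)h(r,s)+g_b(r,s)\bigl(b(r)-a(r+\alpha)\bigr)$, and integrates it with the integrating factor $g_a(t,r)$ — which is exactly an application of the fundamental theorem of calculus to $r\mapsto h(r,s)g_a(t,r)=g_a(t,s)-F(r)$, i.e.\ to your function $F$ up to an additive constant and a sign. Writing down the product $F(r)=e^{-\int_r^t a(u+\alpha)\,du}e^{-\int_s^r b(u)\,du}$ directly and applying FTC, as you do, reaches the same identity without the integrating-factor detour, and your observation that continuity of $a,b$ is all that is needed is accurate.
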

\begin{proof}
Let us denote by $h$ the function defined by $h(t,s)=g_a(t,s)-g_b(t,s),$ where $g_a(t,s)=e^{-\int^t_s a(u+\alpha)du}$ and $g_b(t,s)=e^{-\int^t_s b(u)du}$. Then, by partial differentiation of $h$ with respect to the first variable, we get
$$\begin{aligned}
d_th(t,s)&=-a(t+\alpha) g_a(t,s)+b(t)g_a(t,s),\\
&=-a(t+\alpha)h(t,s)+g_b(t,s)(b(t)-a(t+\alpha)).
\end{aligned}$$
Now, by multiplying by $g_a(t,r)$ and simplifying, we deduce that
$$\begin{aligned}g_b(r,s)(b(r)-a(r+\alpha))g_a(t,r)&=d_r h(r,s)g_a(t,r)+a(r+\alpha)g_a(t,r)h(r,s),\\
&=d_r h(r,s)g_a(t,r)+d_r g_a(t,r)h(r,s),\\
&=d_r(h(r,s)g_a(t,r)).
\end{aligned}$$
Thus, by integration over the interval $[s, t]$, we have that
$$h(t,s)g_a(t,t)-g_a(t,s)h(s,s)=\int^t_s g_b(r,s)g_a(t,r)(b(r)-a(r+\alpha))dr,$$
which implies the result by noticing that $g_a(t, t) = 1$ and $h(s, s) = 0$.
\end{proof}

\begin{cor}\label{corolAA}
If $a,b:t\in \R\rightarrow \R$ continuous functions, then 
\begin{equation}e^{\int^t_s a(u)du}-e^{\int^t_s b(u)du}=\int^t_s e^{ \int^t_r a(u)du}.e^{\int^r_s b(u)du}(a(r)-b(r))dr.
\end{equation}
\end{cor}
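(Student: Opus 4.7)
The corollary is structurally identical to Lemma \ref{AA} (taken at $\alpha=0$) up to a sign inversion: the exponents have been flipped from negative to positive, and consequently the coefficient inside the integrand has swapped from $b(r)-a(r+\alpha)$ to $a(r)-b(r)$. The cleanest approach is therefore not to reprove anything from scratch but to deduce the corollary directly from Lemma \ref{AA} by a change of functions.

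The plan is to apply Lemma \ref{AA} to the pair of continuous functions $\tilde a := -a$ and $\tilde b := -b$, with the shift parameter $\alpha = 0$. Under this substitution, the left-hand side becomes
\[
e^{-\int_s^t \tilde a(u)\,du} - e^{-\int_s^t \tilde b(u)\,du} = e^{\int_s^t a(u)\,du} - e^{\int_s^t b(u)\,du},
\]
which is exactly the left-hand side of the corollary. On the right-hand side, each of the two exponential factors has its sign inside the integral flipped in the same way, so $e^{-\int_r^t \tilde a(u)\,du}e^{-\int_s^r \tilde b(u)\,du}$ becomes $e^{\int_r^t a(u)\,du}e^{\int_s^r b(u)\,du}$. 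Finally the bracketed coefficient $\tilde b(r) - \tilde a(r+0)$ evaluates to $-b(r)+a(r) = a(r)-b(r)$. Assembling these three observations gives precisely the claimed identity.

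There is no real obstacle here: once one notices that Lemma \ref{AA} is already stated for arbitrary continuous $a,b$, the corollary follows by a trivial relabelling. Alternatively, one could redo the direct proof of Lemma \ref{AA} with the function $h(t,s) := e^{\int_s^t a(u)\,du} - e^{\int_s^t b(u)\,du}$, compute $d_t h(t,s) = a(t)h(t,s) + e^{\int_s^t b(u)\,du}(a(t)-b(t))$, multiply by the integrating factor $e^{\int_r^t a(u)\,du}$ to recognise a perfect derivative in $r$, and integrate over $[s,t]$; but the substitution route is shorter and requires no new computation.
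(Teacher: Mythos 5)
Your proposal is correct and is exactly the paper's own argument: the paper's proof consists of the single line ``replace $a$ (resp.\ $b$) by $-a$ (resp.\ $-b$) in Lemma \ref{AA}'' (with $\alpha=0$ understood), which is precisely the substitution you carry out and verify. Your check that the coefficient $\tilde b(r)-\tilde a(r)$ becomes $a(r)-b(r)$ is the only computation needed, and it is right.
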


\begin{proof}\item
We just replace $a$ ($b$ resp.) with $-a_1$  ($-b_1$ resp.) in lemma \ref{AA}.
\end{proof}

\begin{lemma}\label{APinteg}
 If $a,f \in AP(\R,\R)$, then the following function
 $$F_{\infty}(t) =\displaystyle \int^t_{-\infty} e^{-\int^t_s a(u)du}f(s)\text{ ds, }F^{\infty}(t) =\displaystyle \int_t^{+\infty} e^{-\int^t_s a(u)du}f(s)\text{ ds} \in AP(\R),\quad\forall t\in \R $$
 \end{lemma}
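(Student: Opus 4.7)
The plan is to verify the definition of almost periodicity for $F_\infty$ (and then symmetrically for $F^\infty$) using a common $\epsilon$-translation number of $a$ and $f$, with Lemma \ref{AA} handling the delicate exponential comparison. Implicit in the statement is that $a^i=\inf a>0$ (otherwise the improper integrals defining $F_\infty,F^\infty$ need not converge); under this hypothesis, boundedness of $F_\infty$ follows at once from $\bigl|e^{-\int_s^t a(u)du}\bigr|\le e^{-a^i(t-s)}$ and $\|f\|_\infty<\infty$, so $\|F_\infty\|_\infty\le \|f\|_\infty/a^i$; continuity comes from dominated convergence. It remains to establish the $\epsilon$-period property.

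Fix $\epsilon>0$. Since $a,f\in AP(\R,\R)$, from classical Bochner--Fejér theory any finite family of a.p. functions admits common $\epsilon$-periods: choose $\ell_\epsilon>0$ and $\tau\in[\alpha,\alpha+\ell_\epsilon[$ (for every $\alpha\in\R$) with $\sup_{u}|a(u+\tau)-a(u)|\le\epsilon$ and $\sup_{s}|f(s+\tau)-f(s)|\le\epsilon$. Performing the substitution $s\mapsto s+\tau$ in the definition of $F_\infty(t+\tau)$ gives
\begin{equation*}
F_\infty(t+\tau)=\int_{-\infty}^t e^{-\int_s^t a(u+\tau)\,du}\,f(s+\tau)\,ds,
\end{equation*}
so, after adding and subtracting $e^{-\int_s^t a(u+\tau)du}f(s)$, I split
\begin{equation*}
F_\infty(t+\tau)-F_\infty(t)=\underbrace{\int_{-\infty}^t e^{-\int_s^t a(u+\tau)du}\bigl(f(s+\tau)-f(s)\bigr)ds}_{=:I_1(t)}+\underbrace{\int_{-\infty}^t\Bigl[e^{-\int_s^t a(u+\tau)du}-e^{-\int_s^t a(u)du}\Bigr]f(s)\,ds}_{=:I_2(t)}.
\end{equation*}

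The first term is immediate: $|I_1(t)|\le\epsilon\int_{-\infty}^t e^{-a^i(t-s)}ds=\epsilon/a^i$. For $I_2(t)$ I invoke Lemma \ref{AA} with $b=a$ and $\alpha=\tau$, which expresses the bracketed exponential difference as
\begin{equation*}
e^{-\int_s^t a(u+\tau)du}-e^{-\int_s^t a(u)du}=\int_s^t e^{-\int_r^t a(u+\tau)du}\,e^{-\int_s^r a(u)du}\bigl(a(r)-a(r+\tau)\bigr)dr.
\end{equation*}
Using $|a(r+\tau)-a(r)|\le\epsilon$ and the inequalities $e^{-\int_r^t a(u+\tau)du}\le e^{-a^i(t-r)}$, $e^{-\int_s^r a(u)du}\le e^{-a^i(r-s)}$, the inner integral is bounded by $\epsilon(t-s)e^{-a^i(t-s)}$. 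Hence
\begin{equation*}
|I_2(t)|\le\epsilon\,\|f\|_\infty\int_{-\infty}^t(t-s)e^{-a^i(t-s)}ds=\frac{\epsilon\,\|f\|_\infty}{(a^i)^2}.
\end{equation*}
Combining, $\sup_{t\in\R}|F_\infty(t+\tau)-F_\infty(t)|\le\epsilon\bigl(1/a^i+\|f\|_\infty/(a^i)^2\bigr)$, uniformly in $t$, which shows $F_\infty\in AP(\R,\R)$.

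For $F^\infty$ the argument is symmetric, except that the analogous substitution $s\mapsto s+\tau$ yields exponents of the form $-\int_t^s a(u+\tau)du$, and I would apply Corollary \ref{corolAA} instead of Lemma \ref{AA} (with the roles of $r,s,t$ adapted so the exponentials again decay like $e^{-a^i(s-t)}$); the same two estimates go through and produce identical bounds. The main technical hurdle is just the bookkeeping in the exponential-difference step, i.e. applying Lemma \ref{AA} with the correct choice of $\alpha$ and verifying the decay factor $(t-s)e^{-a^i(t-s)}$ is integrable; everything else reduces to standard almost-periodic manipulations.
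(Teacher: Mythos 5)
Your proposal is correct and follows essentially the same route as the paper: a common $\epsilon$-translation number for $a$ and $f$, the add-and-subtract splitting into an $f$-difference term and an exponential-difference term, Lemma \ref{AA} (resp. Corollary \ref{corolAA} for $F^{\infty}$) to control the latter, and the integrable kernel $(t-s)e^{-a^i(t-s)}$ to conclude. Your explicit remark that $\inf_{u\in\R} a(u)>0$ must be assumed for convergence is a point the paper leaves implicit (it silently uses $\underline{a}>0$), so no gap on your side.
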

 \begin{proof}
 For $a \in AP(\R^+)$ and $f\in AP(\R,\R)$, for $ \epsilon >0$; $\exists l_\epsilon>0$; 
$ \exists \tau \in [l_\epsilon-n, l_\epsilon+n]\text{ where } n\in \R, \text{ then }$
$$\begin{array}{lll}

 |f(t+\tau)-f(t)|\leq \epsilon \text{ and }  |a(t+\tau)-a(t)|\leq \epsilon, \text{ } \forall t\in \R.
\end{array}$$
We pose $\epsilon=\epsilon'\dfrac{2\underline{a}^2}{\|f\|_{\infty}+\underline{a}}$ where $\epsilon'>0$.
Therefore, applying the lemma \ref{AA}, we get 
$$\begin{aligned}
\displaystyle{|F_\infty(t+\tau)-F_\infty(t)|}
=&\bigg|\displaystyle{\int^{t}_{-\infty} e^{-\int^{t}_{s}a(u-\tau)du} f(s-\tau) ds-\int^t_{-\infty}e ^{-\int^t_s a(u)du}f(s)ds\bigg|}\\
=&\bigg|\displaystyle{\int^{t}_{-\infty} e^{-\int^{t}_{s}a(u-\tau)du} f(s-\tau) ds-\int^t_{-\infty}e ^{-\int^t_s a(u)du}f(s)ds}\\
&\displaystyle{+\int^{t}_{-\infty} e^{-\int^{t}_{s}a(u)du} f(s-\tau) ds-\int^{t}_{-\infty} e^{-\int^{t}_{s}a(u)du} f(s-\tau) ds\bigg|}\\
\leq&\displaystyle{\int^t_{-\infty}\bigg|e^{- \int^t_s a(u-\tau)du}-e ^{-\int^t_s a(u)du}\bigg|ds \|f\|_{\infty}}\displaystyle{+\int^t_{-\infty} e^{-2(t-s)\underline{a}}|f(s-\tau)-f(s)|ds}\\
\leq& \displaystyle{ \|f\|_{\infty}\int^t_{-\infty}\int^t_s e^{-\int^t_r a(u-\tau)du}
e^{-\int^r_s a(u)du}|a(r-\tau)-a(r)|drds}+\dfrac{\epsilon}{2\underline{a}} \\
\leq& \displaystyle{\epsilon \|f\|_{\infty}\int^t_{-\infty} e^{-2\underline{a}(t-s)}(t-s)ds}+\dfrac{\epsilon}{2\underline{a}},\\
\leq& \displaystyle{ \|f\|_{\infty}\dfrac{\epsilon }{2\underline{a}^2}+\dfrac{\epsilon}{2\underline{a}}}=\epsilon'.
\end{aligned}
$$
and,
here, we pose $\epsilon=\epsilon''\dfrac{2\overline{a}^2}{\|f\|_{\infty}+\overline{a}}$ where $\epsilon''>0$.
Therefore, applying the corollary \ref{corolAA}, we get 
$$\begin{aligned}
\displaystyle{|F^\infty(t+\tau)-F^\infty(t)|}
\leq \displaystyle{ \|f\|_{\infty}\dfrac{\epsilon }{2\overline{a}^2}+\dfrac{\epsilon}{2\overline{a}}}=\epsilon''.
\end{aligned}
$$
 \end{proof}
Define the non-linear operator $\Upsilon$  as follows, for each $(\phi,\psi)\in PAP(\R,\R)\times PAP(\R,\R)$,\\ $\Upsilon(\phi,\psi)=(\Upsilon_1(\phi,\psi),\Upsilon_2(\phi,\psi))$ where {\small
\[ \left (\begin{aligned}\Upsilon_1(\phi,\psi)(t)\\\Upsilon_2(\phi,\psi)(t)\end{aligned}
\right )=
\left (\begin{aligned}&\int^{+\infty}_te^{\int^t_s a_1(r)dr}\bigg[b_1(s)\phi^2(s)+\dfrac{c_1(s)\psi(s-\tau_1(s))\phi(s)}{\phi(s-\sigma_1(s))+k_1(s)}\bigg]ds\\&\int^{+\infty}_t e^{\int^t_s a_2(r)dr}\dfrac{c_2(s)\psi(s-\tau_2(s))\psi(s)}{\phi(s-\sigma_2(s))+k_2(s)}ds
\end{aligned}
\right)
\]}
and Pose
{\small
\[ \left (\begin{aligned}
f_1\big(\phi(t),\phi(t-\sigma_1(t)),\psi(t-\tau_1(t))\big)\\
f_2\big(\phi(t-\sigma_2(t)),\psi(t),\psi(t-\tau_2(t))\big)
\end{aligned}
\right )=
\left (\begin{aligned}
&b_1(t)\phi^2(t)+\dfrac{c_1(t)\psi(t-\tau_1(t))\phi(t)}{\phi(t-\sigma_1(t))+k_1(t)}\\
&\dfrac{c_2(t)\psi(t-\tau_2(t))\psi(t)}{\phi(t-\sigma_2(t))+k_2(t)}.\end{aligned}
\right)
\]}

\begin{lemma}\label{PAP2}\item
If all the functions $a_{1,2},b,c_{1,2},k_{1,2},\tau_{1,2}$ and $\sigma_{1,2}$ are p.a.p and positives,  Then $\Upsilon$ maps $PAP^2(\R,\R_+)$ into itself.
\end{lemma}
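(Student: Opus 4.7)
The proof naturally splits into two parts: first, I would check that the integrands of $\Upsilon_1$ and $\Upsilon_2$, evaluated at a given $(\phi,\psi) \in PAP^2(\R,\R_+)$, are themselves pseudo almost periodic functions of the outer integration variable; second, I would extend the integral identity of Lemma \ref{APinteg} from the almost periodic setting to the pseudo almost periodic setting, in order to conclude that the whole integrals belong to $PAP(\R,\R_+)$.

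For the first part, recall that $(PAP(\R,\R), \|\cdot\|_{\infty})$ is a Banach algebra and, by the proposition stated in the preliminaries, is stable under division by a PAP function whose infimum is strictly positive. Lemma \ref{composepap} supplies $\phi(\cdot - \sigma_i(\cdot))$ and $\psi(\cdot - \tau_i(\cdot))$ in $PAP(\R,\R)$, since $\sigma_i,\tau_i \in PAP(\R,\R)$. Using the PAP hypothesis on $b, c_i, k_i$ together with the lower bound $\phi(t-\sigma_i(t)) + k_i(t) \geq k_i^i > 0$, one deduces by elementary closure properties that both $f_1$ and $f_2$, viewed as functions of $t$, are pseudo almost periodic and non-negative.

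For the second part, decompose each coefficient $a_i = \alpha_i + \beta_i$ and each integrand $g_i \in \{f_1,f_2\}$ as $g_i = g_i^{\mathrm{ap}} + g_i^{0}$, with $\alpha_i, g_i^{\mathrm{ap}} \in AP(\R,\R)$ and $\beta_i, g_i^{0} \in PAP_0(\R,\R)$, and split the integral into four pieces accordingly. The pure AP contribution $\int_t^{+\infty} e^{\int_s^t \alpha_i(r)\,dr} g_i^{\mathrm{ap}}(s)\,ds$ lies in $AP$ by a right-sided analogue of Lemma \ref{APinteg} (obtained from Corollary \ref{corolAA} in exactly the same way Lemma \ref{APinteg} was obtained from Lemma \ref{AA}). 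The three remaining pieces should all belong to $PAP_0$: using the uniform bound $\big|e^{\int_s^t a_i(u)\,du}\big| \leq e^{-a_i^i(s-t)}$ valid for $s \geq t$, one estimates $\frac{1}{2T}\int_{-T}^T |\cdot|\,dt$ by a convolution-type integral against either $g_i^{0}$ or $\beta_i$, then applies Fubini's theorem and translation invariance to conclude vanishing of the mean as $T \to +\infty$.

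The main technical obstacle is controlling the piece that mixes the PAP perturbation $\beta_i$ inside the exponential kernel; this is precisely the reason Lemma \ref{AA} was proved, since it rewrites the difference $e^{\int_s^t a_i(u)\,du} - e^{\int_s^t \alpha_i(u)\,du}$ as a single iterated integral depending linearly on $\beta_i = a_i - \alpha_i$, after which the exponential decay and the $PAP_0$ mean-vanishing estimate apply verbatim. Finally, positivity of $\Upsilon(\phi,\psi)$ is automatic from the positivity of the data and of $\phi,\psi$, so $\Upsilon$ indeed maps $PAP^2(\R,\R_+)$ into itself.
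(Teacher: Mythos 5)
Your proposal is correct and its skeleton coincides with the paper's: first show that $f_1,f_2$ evaluated along $(\phi,\psi)$ are pseudo almost periodic (via Lemma \ref{composepap}, the algebra properties of $PAP(\R,\R)$ and the lower bound $\phi(t-\sigma_i(t))+k_i(t)\geq k_i^i>0$), then split the integrand into its $AP$ and $PAP_0$ components, treat the $AP$ part with the integral lemma and the $PAP_0$ part with the Fubini--Tonelli change of variables and dominated convergence. Where you genuinely diverge is in also decomposing the coefficient $a_i=\alpha_i+\beta_i$ inside the exponential kernel and invoking Lemma \ref{AA} (resp.\ Corollary \ref{corolAA}) to rewrite $e^{\int_s^t a_i}-e^{\int_s^t\alpha_i}$ as an iterated integral linear in $\beta_i$. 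The paper does not do this: it keeps the full kernel $e^{\int_s^t a_j}$ in both pieces and cites Lemma \ref{APinteg} to conclude that $I_j\in AP(\R,\R_+)$, even though that lemma is stated only for $a\in AP(\R,\R)$ while the hypothesis of Lemma \ref{PAP2} only gives $a_j\in PAP(\R,\R)$; strictly read, the paper's $I_j$ need not be almost periodic and the claimed $AP\oplus PAP_0$ decomposition of $\Upsilon_j$ is not literally obtained. Your extra step closes exactly this gap, at the cost of having to verify that the three mixed pieces lie in $PAP_0$ (which does follow from the exponential decay of the kernels, the boundedness of $g_i^{\mathrm{ap}},g_i^0$, and the same convolution estimate the paper uses for $II_j$, applied now to $|\beta_i|$ as well as to $|g_i^0|$). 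In short, the two arguments buy the same conclusion, but yours is the more complete one; the paper's is shorter only because it silently applies Lemma \ref{APinteg} outside its stated hypotheses.
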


\begin{proof}
Fix $\phi,\psi\in PAP(\R,\R_+)$. It follows from Lemma \ref{composepap} that $\phi(.-\sigma_{1}(.)),\psi(.-\tau_1(.))\in  PAP(\R,\R_+)$.\\ From $inf_{s\in \R}\phi(s-\sigma_1(s))+k_{1}(s)>0$ and from proprieties of space $PAP(\R,\R)$, we infer that
{\small $$f_1\big(\phi(s),\phi(s-\sigma_1(s)),\psi(s-\tau_1(s))\big)=b_1(s)\phi^2(s)+\dfrac{c_1(s)\psi(s-\tau_1(s))\phi(s)}{\phi(s-\sigma_1(s))+k_1(s)}\in PAP(\R_+).$$}
Similarly stages,   {\small $$f_2\big(\phi(s-\sigma_2(s)),\psi(s),\psi(s-\tau_2(s))\big)=\dfrac{c_2(s)\psi(s-\tau_2(s))\psi(s)}{\phi(s-\sigma_2(s))+k_2(s)}\in PAP(\R,\R_+).$$}
Consequently, for all $1 \leq j \leq 2,$ $f_j$ can be expressed as
$$f_j = f_{j1} + f_{j2}.$$
where $ f_{j1} \in AP(\R,\R_+)$ and $f_{j2} \in PAP_0(\R,\R,\R_+)$. So,
{\small
\begin{align*}
\Upsilon_j(\phi,\psi)(t)&=\displaystyle{\int^{+\infty}_t e^{\int^t_s a_j(r)dr} f_j\big(\phi(s),\phi(s-\sigma_j(s)),\psi(s-\tau_j(s))\big)ds,}\\
&\displaystyle{=\int^{+\infty}_t e^{\int^t_s a_j(r)dr}\bigg( f_{j1}\big(\phi(s),\phi(s-\sigma_j(s)),\psi(s-\tau_j(s))\big)+ f_{j2}\big(\phi(s),\phi(s-\sigma_j(s)),\psi(s-\tau_j(s))\big)\bigg)ds}\\
&\displaystyle{=I_j(t)+II_j(t)}.
\end{align*} }

By lemma \ref{APinteg}, $I_j(t)\in AP(\R,\R^+)$, for $j=1,2$.\\
In the other hand, we prove that $II_j(t)\in PAP_0(\R,\R_+)$, for j = 1,2. We have
$$\displaystyle{\int^T_{-T}\big|II_j(t)\big|dt=\int^T_{-T}\big|\int_t^{+\infty} e^{\int^t_s a_j(r)dr} f_{j2}\big(\phi(s),\phi(s-\sigma_j(s)),\psi(s-\tau_j(s))\big)ds\big|dt.}$$
Pose $f_{j2}(s)= f_{j2}\big(\phi(s),\phi(s-\sigma_j(s)),\psi(s-\tau_j(s))\big)$ and $\vartheta=t-s$. Then by Fubini Tonnelli's Theorem one has 
$$\begin{array}{lll}
\displaystyle{\dfrac{1}{2T}\int^T_{-T}\big|II_j(t)\big|dt}&\displaystyle{=\dfrac{1}{2T}\int^T_{-T}\int_t^{+\infty} e^{\int^t_s a_j(r)dr} f_{j2}(s)dsdt,}\\
&\displaystyle{\leq \dfrac{1}{2T}\int^0_{-\infty}\int^T_{-T} e^{-a_j^i\vartheta} f_{j2}(t-\vartheta)dtd\vartheta,}\\
\end{array}$$
Pose $s=t-\vartheta$, then
$$\begin{array}{lll}
\displaystyle{\dfrac{1}{2T}\int^T_{-T}\big|II_j(t)\big|dt}\displaystyle{\leq \int^0_{-\infty}\dfrac{1}{2T}\int^{T+\vartheta}_{-(T+\vartheta)} e^{-a_j^i\vartheta}f_{j2}(s)dsd\vartheta,}
\end{array}$$

Since the function $f_{j2} \in PAP_0(\R,\R^+)$, then the following function $\Psi_T$  
$$\Psi_T(\vartheta)=\dfrac{T+\vartheta}{T} \dfrac{1}{2(T+\vartheta)}\int^{T+\vartheta}_{-(T+\vartheta)}f_{j2} (s)ds$$
is bounded and $lim_{T\rightarrow+\infty}\Psi_T(\vartheta)=0$. Hence from dominated convergence
Theorem, we obtain that  $II_j(t)\in PAP_0(\R,\R_+)$, for $j = 1,2$. So for all $j=1,2$, $\Upsilon_j(\phi,\psi)$ belongs to $PAP(\R,\R_+)$ and consequently $\Upsilon$ belongs to $PAP^2(\R,\R_+)$.

\end{proof}
\begin{definition} \cite{coppel2006dichotomies}Let $X\in \R^n$ and let $A(t)$ be a $n \times n$ continuous matrix defined on
$\R$. The linear system
\begin{equation}\label{eqexpo}
X'(t) = A(t)X(t) 
\end{equation}
is said to admit an exponential dichotomy on $\R$ if there exist positive constants $k$; $h_1$; $h_2$ a projection $P$ (i.e $P^2=I$), and the fundamental solution matrix $Y (t)$ of \eqref{eqexpo} satisfying
\begin{eqnarray*}
\|Y(t)PY^{-1}(s)\|&\leq& k e^{-h_1(t-s)},\quad t\geq s\\
\|Y(t)(I-P)Y^{-1}(s)\|&\leq& k e^{-h_2(s-t)},\quad  s\geq t
\end{eqnarray*}
where $I$ is the identity matrix.
\end{definition}
\begin{lemma}\cite{coppel2006dichotomies} \label{exdic}If the linear system \eqref{eqexpo} admits an exponential dichotomy and $f \in
BC(\R,\R^n)$, then the system
\begin{equation}
X'(t) = A(t)X(t) + f(t)
\end{equation}
has a bounded solution $\overset{\backsim}{X}(t)$, and
\begin{equation}
\displaystyle \overset{\backsim}{X}(t) = \int^t_{-\infty}Y(t)PY^{-1}(s)f(s)ds -\int_t^{+\infty}Y(t)(I - P)Y^{-1}(s)f(s)ds; \end{equation}
where $Y(t)$ is the fundamental solution matrix of \eqref{eqexpo}.
\end{lemma}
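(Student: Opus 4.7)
The plan is to take the right-hand side of the displayed formula as an ansatz $\tilde X(t)$ and verify directly that it is a bounded classical solution of $X'(t)=A(t)X(t)+f(t)$. This amounts to three tasks: establishing absolute convergence of the two improper integrals, obtaining a uniform bound on $\tilde X$, and checking the differential equation by differentiating under the integral sign.

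The first two tasks are immediate from the exponential dichotomy estimates. For $s\leq t$ the kernel satisfies $\|Y(t)PY^{-1}(s)\|\leq ke^{-h_1(t-s)}$, and for $s\geq t$ one has $\|Y(t)(I-P)Y^{-1}(s)\|\leq ke^{-h_2(s-t)}$; combining these with $\|f(s)\|\leq\|f\|_\infty$ shows that the integrands are dominated by integrable exponentials and yields the global bound
$$\|\tilde X(t)\|\leq k\|f\|_\infty\left(\frac{1}{h_1}+\frac{1}{h_2}\right),\qquad t\in\R.$$

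For the ODE I would split $\tilde X=U+V$ with $U(t)=Y(t)P\int_{-\infty}^{t}Y^{-1}(s)f(s)\,ds$ and $V(t)=-Y(t)(I-P)\int_{t}^{+\infty}Y^{-1}(s)f(s)\,ds$, using that $P$ is a constant projection. Differentiating via the Leibniz rule and the identity $Y'(t)=A(t)Y(t)$, the ``interior'' pieces sum to $A(t)\tilde X(t)$, while the two boundary contributions at $s=t$ combine through $P+(I-P)=I$ to $Y(t)Y^{-1}(t)f(t)=f(t)$, giving $\tilde X'(t)=A(t)\tilde X(t)+f(t)$ as required.

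The one technical point to attend to is rigorously justifying differentiation under the improper integrals. This follows from dominated convergence: the exponentially decaying dichotomy kernels together with the boundedness of $f$ provide integrable majorants that are uniform on any compact $t$-interval, after which the rest of the argument is purely algebraic manipulation of $P$, $I-P$ and $Y$.
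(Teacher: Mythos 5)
The paper offers no proof of this lemma: it is quoted verbatim from Coppel's monograph on dichotomies, so there is nothing internal to compare against. Your verification is the standard one and is essentially correct: the dichotomy estimates give absolute convergence and the bound $k\|f\|_\infty(h_1^{-1}+h_2^{-1})$, and the derivative computation closes via $P+(I-P)=I$.

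One technical point deserves a more careful statement. Writing $U(t)=Y(t)P\int_{-\infty}^{t}Y^{-1}(s)f(s)\,ds$ presupposes that the improper integral $\int_{-\infty}^{t}Y^{-1}(s)f(s)\,ds$ exists on its own, and in general it does not: the dichotomy only controls the \emph{projected} kernels, and the component of $Y^{-1}(s)$ in the range of $I-P$ can blow up as $s\to-\infty$ (e.g.\ $A=\mathrm{diag}(-1,1)$, $f\equiv(1,1)^{T}$). The correct factorization keeps the projection inside the limit, $U(t)=Y(t)\int_{-\infty}^{t}PY^{-1}(s)f(s)\,ds$, which converges because $\|PY^{-1}(s)\|\le\|Y(t)^{-1}\|\,k\,e^{-h_1(t-s)}$ for $s\le t$. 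Once written this way, your worry about differentiating under the integral sign evaporates: the integrand no longer depends on $t$, so $U'$ follows from the fundamental theorem of calculus plus the product rule with $Y'(t)=A(t)Y(t)$, and no dominated-convergence argument is needed. The same remark applies to $V$. With that adjustment the proof is complete.
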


\begin{theorem}
( Schauder Theorem )\cite{smart1980fixed}: \\
Let M be a non-empty convex subset of a normed space B. Let T be
a continuous mapping of M into a compact set $H \subset M$. Then T has at least fixed point.
\end{theorem}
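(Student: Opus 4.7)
The plan is to reduce the infinite-dimensional statement to Brouwer's finite-dimensional fixed point theorem through a classical Schauder projection argument. The key observation is that although $M$ may be enormous, the image $T(M) \subset H$ is relatively compact, so $T$ can be approximated uniformly by maps landing in finite-dimensional convex subsets of $M$, and on such subsets Brouwer applies.

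First, for each integer $n \geq 1$, I would use compactness of $H$ to extract a finite $\tfrac{1}{n}$-net $\{y_1^n,\dots,y_{k_n}^n\} \subset H$ and define the Schauder projection
\[
P_n(x) = \frac{\sum_{i=1}^{k_n} \varphi_i^n(x)\, y_i^n}{\sum_{i=1}^{k_n} \varphi_i^n(x)}, \qquad \varphi_i^n(x) = \max\!\bigl(0,\; \tfrac{1}{n} - \|x-y_i^n\|\bigr).
\]
This $P_n$ is continuous on $H$, satisfies $\|P_n(x)-x\|\le \tfrac{1}{n}$, and takes values in $K_n := \operatorname{conv}(y_1^n,\dots,y_{k_n}^n)$. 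Because each $y_i^n \in H \subset M$ and $M$ is convex, we have $K_n \subset M$. Therefore the composition $F_n := P_n \circ T$ restricts to a continuous self-map of the finite-dimensional compact convex set $K_n$, and Brouwer's theorem delivers a point $x_n \in K_n$ with $P_n(T(x_n)) = x_n$.

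Next I would pass to the limit. Since $T(x_n) \in H$ and $H$ is compact, a subsequence satisfies $T(x_{n_k}) \to y \in H$. The uniform estimate $\|x_{n_k} - T(x_{n_k})\| = \|P_{n_k}(T(x_{n_k})) - T(x_{n_k})\| \le \tfrac{1}{n_k}$ forces $x_{n_k} \to y$ as well, and continuity of $T$ yields $T(x_{n_k}) \to T(y)$. Uniqueness of limits gives $T(y) = y$, with $y \in H \subset M$ the desired fixed point.

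The principal obstacle is Brouwer's theorem itself, which must be assumed as a prerequisite; it is traditionally proved via Sperner's lemma and simplicial approximation, via topological degree, or via the no-retraction theorem, none of which I would redo here. A subtler point worth pausing over is that $K_n \subset M$ is guaranteed without any closedness assumption on $M$: the statement only requires convexity, and since $H \subset M$, taking finite convex combinations of points in $H$ stays in $M$. All other steps are routine continuity and compactness manipulations.
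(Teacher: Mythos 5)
The paper does not prove this theorem: it is stated as a classical result imported from Smart's \emph{Fixed Point Theorems} \cite{smart1980fixed} and used as a black box, so there is no internal proof to compare yours against. Your argument is the standard textbook proof via Schauder projections: finite $\tfrac{1}{n}$-nets of the compact set $H$, the projection $P_n$ with $\|P_n(x)-x\|\le \tfrac{1}{n}$ landing in the finite-dimensional compact convex set $K_n=\operatorname{conv}(y_1^n,\dots,y_{k_n}^n)\subset M$, Brouwer on $K_n$, and a compactness-plus-continuity passage to the limit. The steps all check out, including the two points most often fumbled: that $K_n\subset M$ requires only convexity of $M$ together with $H\subset M$ (no closedness), and that the approximate fixed points $x_{n_k}$ and the images $T(x_{n_k})$ converge to the same limit because of the uniform projection estimate. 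One minor remark: you should note that $P_n$ is well defined on $H$ because the net property guarantees $\sum_i \varphi_i^n(x)>0$ for every $x\in H$; this is implicit in your construction but worth a sentence. Modulo Brouwer, which you correctly identify as the assumed input, the proof is complete and correct.
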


Let define the  following set 
$$\mathcal{ M}=\{\phi,\psi\in PAP(\R);m_1\leq\phi\leq M_1 \text{ and }m_2\leq\psi\leq M_2\}.$$

\begin{theorem}\label{existenceschauder}The  differential system  \eqref{prey-predatorII} has at least  pap solution in  $M$.
\end{theorem}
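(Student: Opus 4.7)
The plan is to apply Schauder's fixed point theorem to the operator $\Upsilon$ introduced just before Lemma \ref{PAP2}. The first step is to identify fixed points with solutions: rewriting the $j$-th equation of \eqref{prey-predatorII} as $z_j'(t) - a_j(t)z_j(t) = -f_j(t,\ldots)$ and applying variation of constants backwards in time, using that $a_j^i > 0$ forces $e^{-\int_0^T a_j(u)du} \to 0$ as $T \to +\infty$ while any candidate in $\mathcal{M}$ stays bounded, I would obtain $z_j(t) = \Upsilon_j(\phi,\psi)(t)$. Conversely, differentiating under the integral recovers the ODE. Hence a fixed point of $\Upsilon$ in $\mathcal{M}$ is exactly a pseudo almost periodic solution of \eqref{prey-predatorII} lying in $\mathcal{M}$.

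Next I would check Schauder's hypotheses on $\mathcal{M}$. The set is clearly non-empty (it contains the constant pair $(M_1,M_2)$), convex (its defining constraints are linear inequalities), and closed in the Banach space $(PAP(\mathbb{R},\mathbb{R}),\|\cdot\|_\infty)^2$, since uniform convergence preserves pointwise inequalities. By Lemma \ref{PAP2}, $\Upsilon$ already maps $PAP^2(\mathbb{R},\mathbb{R})$ into itself, so the remaining invariance issue is to verify the bound estimates $m_i \leq \Upsilon_i(\phi,\psi) \leq M_i$ for $(\phi,\psi) \in \mathcal{M}$. These would follow by estimating the integrand $f_j$ using the defining bounds of $\mathcal{M}$ and the envelope $e^{\int_s^t a_j(r)dr} \leq e^{-a_j^i(s-t)}$ for $s \geq t$, in the spirit of the computations of Theorem \ref{uniformpermanent}; condition $(C0)$ together with the explicit formulas for $m_i, M_i$ is exactly what one needs to close these inequalities.

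For continuity of $\Upsilon$, if $(\phi_n,\psi_n) \to (\phi,\psi)$ uniformly in $\mathcal{M}$, then the integrand converges uniformly on $\mathbb{R}$ (the denominator is bounded below by $m_1 + k_1^i > 0$) and $e^{-a_j^i(s-t)}$ provides an integrable majorant, so dominated convergence yields $\Upsilon(\phi_n,\psi_n) \to \Upsilon(\phi,\psi)$ in the sup-norm. For relative compactness of $\Upsilon(\mathcal{M})$, I would differentiate under the integral sign to obtain
$$\frac{d}{dt}\Upsilon_j(\phi,\psi)(t) = a_j(t)\Upsilon_j(\phi,\psi)(t) - f_j\bigl(\phi(t),\phi(t-\sigma_j(t)),\psi(t-\tau_j(t))\bigr),$$
which is uniformly bounded over $\mathcal{M}$, whence $\Upsilon(\mathcal{M})$ is uniformly bounded and equicontinuous. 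Combining everything, Schauder's theorem then delivers a fixed point $(\phi^*,\psi^*) \in \mathcal{M}$ which is the desired p.a.p. solution.

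I expect the main obstacle to be the compactness step: the Banach space $(PAP(\mathbb{R},\mathbb{R}),\|\cdot\|_\infty)$ is not locally compact, so uniform boundedness and equicontinuity, though necessary, do not by themselves give norm-compactness on the whole real line. To upgrade them to genuine relative compactness inside $\mathcal{M}$, one must exploit the exponential decay of the kernel $e^{-a_j^i(s-t)}$, which makes the tails of $\Upsilon_j(\phi,\psi)$ depend only on remote values of $(\phi,\psi)$, and combine a diagonal Arzelà--Ascoli extraction on growing compact intervals with the $AP + PAP_0$ decomposition coming from Lemma \ref{PAP2} to promote local uniform convergence to convergence in $\|\cdot\|_\infty$.
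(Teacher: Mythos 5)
Your proposal follows essentially the same route as the paper: Schauder's fixed point theorem applied to the operator $\Upsilon$ on the convex set $\mathcal{M}$, with the fixed-point/solution identification coming from the bounded-solution formula for the frozen linear system, invariance of $\mathcal{M}$ checked against the permanence bounds $m_i,M_i$ by comparison, continuity via explicit Lipschitz-type estimates, and compactness via uniform boundedness plus equicontinuity and Arzel\`a--Ascoli. The one substantive remark is that the obstacle you flag at the end is real and is not actually resolved in the paper either --- the paper proves equicontinuity and pointwise relative compactness, applies Arzel\`a--Ascoli only on bounded intervals, and then asserts compactness of $\Upsilon$ on $\overline{co}\,\Upsilon\mathcal{M}$ in the sup norm over all of $\R$ --- so your proposed repair using the exponential decay of the kernel $e^{-a_j^i(s-t)}$ to control the tails is a genuine strengthening of the argument rather than a deviation from it.
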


\begin{proof}
  \quad Fix $\phi,\psi\in \mathcal{M}$. Let us consider the following differential system
  {\small \begin{align}\label{systemf}
\left \{\begin{aligned}
  u'(t)&=a_1(t)u(t)-b(t)\phi^2(t)-\dfrac{c_1(t)\phi(t)\psi(t-\tau_1(t))}{k_1(t)+\phi(t-\sigma_1(t))},\\
   v'(t)&=a_2(t)v(t)-\dfrac{c_2(t)\psi(t)\psi(t-\tau_2(t))}{k_2(t)+\phi(t-\sigma_2(t))}.
  \end{aligned}\right. 
\end{align} }
 It is clearly   that   {\small $$b(t)\phi^2(t)+\dfrac{c_1(t)\phi(t)\psi(t-\tau_1(t))}{k_1(t)+\phi(t-\sigma_1(t))}, \dfrac{c_2(t)\psi(t)\psi(t-\tau_2(t))}{k_2(t)+\phi(t-\sigma_2(t))} \in BC(\R,\R).$$ }
By Lemma \ref{exdic}, the system  \eqref{systemf} has a unique bounded solution given by
\[ \overset{\backsim}{X}(t)=\left (\begin{matrix}
\displaystyle \int^{+\infty}_{t} e^{\int^t_s a_1(u)du}f_1\big(\phi(s),\phi(s-\sigma_1(s)),\psi(s-\tau_1(s))\big)ds\\
\displaystyle \int^{+\infty}_{t} e^{\int^t_s a_2(u)du}f_2\big(\phi(s),\phi(s-\sigma_2(s)),\psi(s-\tau_2(s))\big)ds
\end{matrix}
\right)\]

where{\small
\[ \left (\begin{aligned}
f_1\big(\phi(t),\phi(t-\sigma_1(t)),\psi(t-\tau_1(t))\big)\\
f_2\big(\phi(t-\sigma_2(t)),\psi(t),\psi(t-\tau_2(t))\big)
\end{aligned}
\right )=
\left (\begin{aligned}
&b_1(t)\phi^2(t)+\frac{c_1(t)\psi(t-\tau_1(t))\phi(t)}{\phi(t-\sigma_1(t))+k_1(t)}\\
&\frac{c_2(t)\psi(t-\tau_2(t))\psi(t)}{\phi(t-\sigma_2(t))+k_2(t)}\end{aligned}
\right).
\]}
Then, we consider the operator $\Upsilon$. 
By lemma \ref{PAP2}, we get $\Upsilon $ maps  $PAP(\R,\R^+)\times PAP(\R,\R^+)$ into itself. By the way, ones has the following inequality 
$$u'(t)\leq a^s_1 u(t)-b^i \phi^2(t).$$
By the same lemma \ref{exdic} and by definition of $M_1$, the equation $z(t)=a^s_1 z(t)-b^i \phi^2(t)$ has a unique bounded solution given by 
$$\overset{\backsim}z(t)=\int^{+\infty}_t e^{a^s_1(t-s)} b^i \phi^2(s)ds \text{ and }\overset{\backsim}z(t)\leq M_1 .$$
Using the comparison theorem, we get 
$$\Upsilon_1(\phi,\psi)(t)\leq \overset{\backsim}z(t)\leq M_1.$$
We consider the inequality \eqref{d2v}
$$v'(t)\leq  a_2^s v(t)-\dfrac{c_2^iexp (-a_2^s \tau_2^s)}{M_1+k_2^s}\psi^2(t).$$
By the same lemma \ref{exdic} and by definition of $M_2$, the equation $z'(t)=a_2^s z(t)-\frac{c_2^iexp(-a_2^s \tau_2^s)}{M_1+k_2^s}\psi^2(t)$ has a unique bounded solution given by 
$$\overset{\backsim}z(t)=\int^{+\infty}_t e^{a^s_2(t-s)}\frac{c_2^iexp(-a_2^s \tau_2^s)}{M_1+k_2^s}\psi^2(s)ds \text{ and }\overset{\backsim}z(t)\leq M_2 .$$
Using the comparison theorem, we get 
$$\Upsilon_2(\phi,\psi)(t)\leq \overset{\backsim}z(t)\leq M_2.$$
 From the inequality \eqref{d2u}, ones has
$$
\dfrac{du(t)}{dt}\geq  a_1^iu(t)-\frac{M_2c_1^s}{k_1^i}\phi(t)-b_1^s\phi^2(t).$$
By the same lemma \ref{exdic} and by definition of $m_1$, the equation $z'(t)=a_1^iz(t)-\frac{M_2c_1^s}{k_1^i}\phi(t)-b_1^s\phi^2(t)$ has a unique bounded solution given by 
$$\overset{\backsim}z(t)=\int^{+\infty}_t e^{a^i_1(t-s)}\left(\frac{M_2c_1^s}{k_1^i}\phi(t)+b_1^s\phi^2(s)\right)ds \text{ and }\overset{\backsim}z(t)\geq m_1 .$$
Using the comparison theorem, we get 
$$\Upsilon_1(\phi,\psi)(t)\geq \overset{\backsim}z(t)\geq m_1.$$

 From the inequality \eqref{ddv}, ones has
$$\dfrac{dv(t)}{dt}\geq a_2^iv(t)-\frac{c_2^sexp\left(\frac{c_2^s M_2\tau_2^s}{k_2^i+m_1}\right)}{m_1+k_2^i}\psi^2(t).$$
Using the  lemma \ref{exdic} and by definition of $m_2$, the equation $z(t)=a_2^iz(t)-\frac{c_2^sexp\left(\frac{c_2^s M_2\tau_2^s}{k_2^i+m_1}\right)}{m_1+k_2^i}\psi^2(t)$ has a unique bounded solution given by 
$$\overset{\backsim}z(t)=\int^{+\infty}_t e^{a^i_2(t-s)}\frac{c_2^s exp\left(\frac{c_2^s M_2\tau_2^s}{k_2^i+m_1}\right)}{m_1+k_2^i}\psi^2(s)ds \text{ and }\overset{\backsim}z(t)\geq m_2 .$$
Using the comparison theorem, we get 
$$\Upsilon_2(\phi,\psi)(t)\geq \overset{\backsim}z(t)\geq m_2.$$
  Therefore, $\Upsilon(\mathcal{M})\subseteq \mathcal{M}$.
  Next step, we prove that $\Upsilon$ is continuous. For $(\phi_1,\psi_1), (\phi_2,\psi_2)\in \mathcal{M}$, such that $\|\phi_1-\phi_2\|_\infty\leq\dfrac{min(a^i_1,a^i_2)}{6\alpha} \epsilon$ and  $\|\psi_1-\psi_2\|_\infty\leq \dfrac{min(a^i_1,a^i_2)}{2\beta} \epsilon$, where
 $$\alpha=max\left(2.M_1 b^s,\frac{c_1^s M_1 M_2}{(k^i_1)^2},\frac{c_2^sM_2^2}{(k^i_2)^2},\frac{c_1^sM_2}{k^i_1}\right) \text{ and }\beta=max\left( \frac{c_1^sM_1}{k^i_1}, \frac{2c_2^s M_1}{k^i_2}\right),$$
one has
 {\small\begin{align*}
  |\Upsilon_1(\phi_1,\psi_1)(t)-\Upsilon_1(\phi_2,\psi_2)(t)|=& \displaystyle{\bigg|\int_t^{+\infty} e^{\int^t_s a_1(r)dr}f_1\left(\phi_1(s),\phi_1(s-\sigma_1(s)),\psi_1(s-\tau_1(s)))\right)\text{ } ds}\\
  &\displaystyle{-\int_t^{+\infty} e^{\int^t_s a_1(r)dr}f_1\left(\phi_2(s),\phi_2(s-\sigma_1(s)),\psi_2(s-\tau_1(s)))\right)\text{ } ds\bigg|}\\
  =&{\displaystyle{\bigg|\int_t^{+\infty} e^{\int^t_s a_1(r)dr}}}\left[b(s)\phi_1^2(s)+\frac{c_1(s)\psi_1(s-\tau_1(s))\phi_1(s)}{\phi_1(s-\sigma_1(s))+k_1(s)}\right]\text{ } ds\\
  &{\displaystyle{-\int_t^{+\infty} e^{\int^t_s a_1(r)dr}}}\left[b(s)\phi_2^2(s)+\frac{c_1(s)\psi_2(s-\tau_1(s))\phi_2(s)}{\phi_2(s-\sigma_1(s))+k_1(s)}\right]\bigg|\text{ } ds\\
   \leq&\displaystyle{\int_t^{+\infty} e^{\int^t_s a_1(r)dr}}b(s)\left|\phi_1^2(s)-\phi_2^2(s)\right|\text{ } ds\\
&{\displaystyle{+\int_t^{+\infty} e^{\int^t_s a_1(r)dr}c_1(s)}}
  \left|\frac{\psi_1(s-\tau_1(s))\phi_1(s)}{\phi_1(s-\sigma_1(s))+k_1(s)}-
\frac{\psi_2(s-\tau_1(s))\phi_2(s)}{\phi_2(s-\sigma_1(s))+k_1(s)}\right|ds
  \end{align*}}
  For simplicity, pose that 
  $${\psi_j}_s(\tau_1(s))=\psi_j(s-\tau_1(s),{\phi_j}_s(\sigma_1(s))=\phi_j(s-\sigma_1(s)),\text{ for }j=1,2.$$ 
We have{\small
\begin{align*}
 \left|\frac{{\psi_1}_s(\tau_1(s))\phi_1(s)}{{\phi_1}_s(\sigma_1(s))+k_1(s)}-
\frac{{\psi_2}_s(\tau_1(s))\phi_2(s)}{{\phi_2}_s(\sigma_1(s))+k_1(s)}\right|
&\leq\frac{{\psi_1}_s(\tau_1(s))}{{\phi_1}_s(\sigma_1(s))+k_1(s)}\|\phi_1-\phi_2\|_{\infty}+\frac{\phi_2(s)}{{\phi_1}_s(\sigma_1(s))+k_1(s)}\|\psi_1-\psi_2)\|_{\infty}\\
&+\left|\frac{{\psi_2}_s(\tau_1(s))\phi_2(s)}{{\phi_1}_s(\sigma_1(s))+k_1(s)}-\frac{{\psi_2}_s(\tau_1(s))\phi_2(s)}{{\phi_2}_s(\sigma_1(s))+k_2(s)}\right|\\
&\leq\frac{M_2}{k_1^i}\|\phi_1-\phi_2\|_{\infty}+\frac{M_1}{k_1^i}\|\psi_1-\psi_2\|_{\infty}+\frac{M_1M_2}{(k_1^i)^2}\|\phi_1-\phi_2\|_{\infty}.
\end{align*}}
And we have
\begin{align*}
|\phi_1^2(s)-\phi_2^2(s)|&= |(\phi_1(s)+\phi_2(s))(\phi_1(s)-\phi_2(s)|\\
&\leq 2M_1\|\phi_1-\phi_2\|_{\infty}.
\end{align*}
Therefore, we obtain 
 {\small \begin{align*}
  |\Upsilon_1(\phi_1,\psi_1)(t)-\Upsilon_1(\phi_2,\psi_2)(t)| \leq&\displaystyle{\int_t^{+\infty} e^{\int^t_s a_1(r)dr}\bigg\{2b^sN1 \|\phi_1-\phi_2\|_{\infty}\text{ } }\\
 &+ c_1(s)\bigg(\frac{M_2}{k_1^i}+\frac{M_1M_2}{(k_1^i)^2}\bigg)\|\phi_1-\phi_2\|_{\infty} +\frac{M_1}{k_1^i}\|\psi_1-\psi_2\|_{\infty}\bigg\}\text{ } ds\\
  \leq&{\displaystyle{\int_t^{+\infty} e^{a_1^i(t-s)}}}\bigg\{\left(2b^sM_1 +\frac{c_1^sM_2}{k_1^i}+\frac{c_1^sM_1M_2}{(k_1^i)^2}\right)\|\phi_1-\phi_2\|_{\infty}\\
&+ \frac{c_1^sM_1}{k_1^i}\|\psi_1-\psi_2\|_{\infty}\bigg\}\text{ } ds\\
   \leq&\displaystyle{\int_t^{+\infty} e^{a_1^i(t-s)}\left\lbrace 3\alpha \|\phi_1-\phi_2\|_{\infty}+\beta \|\psi_1-\psi_2\|_{\infty}\right\rbrace\text{ } ds}\\
   = &3\frac{\alpha}{a_1^i} \|\phi_1-\phi_2\|_{\infty}+\frac{\beta}{a_1^i} \|\psi_1-\psi_2\|_{\infty}\\
\leq &\frac{\epsilon}{2}+\frac{\epsilon}{2}=\epsilon.
  \end{align*}}
 By conclusion, $\Upsilon_1$ is a continuous operator.
  
  Now, we should prove that $\Upsilon_2$ is a continuous operator.
 {\small \begin{align*}
 |\Upsilon_2(\phi_1,\psi_1)(t)-\Upsilon_2(\phi_2,\psi_2)(t)|=& \displaystyle{\bigg|\int_t^{+\infty} e^{\int^t_s a_2(r)dr}f_2\left(\phi_1(s-\sigma_2(s)),\psi_1(s),\psi_1(s-\tau_2(s)))\right)\text{ } ds}\\
  &\displaystyle{-\int_t^{+\infty} e^{\int^t_s a_2(r)dr}f_2\left(\phi_2(-\sigma_2(s)s),\psi_2(s),\psi_2(s-\tau_2(s)))\right)\text{ } ds\bigg|}\\
=&\displaystyle{\bigg|\int_t^{+\infty} e^{\int^t_s a_2(r)dr}}\frac{c_2(s)\psi_1(s-\tau_2(s))\psi_1(s)}{\phi_1(s-\sigma_2(s))+k_2(s)}\text{ } ds\\
&\displaystyle{-\int_t^{+\infty} e^{\int^t_s a_2(r)dr}}\frac{c_2(s)\psi_2(s-\tau_2(s))\psi_2(s)}{\phi_2(s-\sigma_2(s))+k_2(s)}\text{ } ds\bigg|\\
\leq&{\displaystyle{\int_t^{+\infty} e^{\int^t_s a_2(r)dr}}}c_2(s)\bigg|\frac{{\psi_1}_s(\tau_2(s))\psi_1(s)}{{\phi_1}_s(\sigma_2(s))+k_2(s)}-
\frac{{\psi_2}_s(\tau_2(s))\psi_2(s)}{{\phi_2}_s(\sigma_2(s))+k_2(s)}\bigg|\text{ } ds.
  \end{align*}}
For $j=1$ or 2, pose that
 $${\psi_j}_s(\tau_2(s))=\psi_j(s-\tau_2(s)),{\phi_j}_s(\sigma_2(s))=\phi_j(s-\sigma_2(s)). $$ 
 We get
 {\small\begin{align*}
\bigg|\frac{{\psi_1}_s(\tau_2(s))\psi_1(s)}{{\phi_1}_s(\sigma_2(s))+k_2(s)}-
\frac{{\psi_2}_s(\tau_2(s))\psi_2(s)}{{\phi_2}_s(\sigma_2(s))+k_2(s)}\bigg|\leq&\frac{{\psi_1}_s(\tau_2(s))}{{\phi_1}_s(\sigma_2(s))+k_2(s)}\|\psi_1-\psi_2\|_{\infty}+\frac{\psi_2(s)}{{\phi_1}_s(\sigma_2(s))+k_2(s)}\|\psi_1-\psi_2\|_{\infty}\\
&+\frac{{\psi_1}_s(\tau_2(s))\psi_1(s)}{({\phi_1}_s(\sigma_2(s))+k_2(s))({\phi_2}_s(\sigma_2(s))+k_2(s))}\|\phi_1-\phi_2\|_{\infty}\\
\leq& \frac{2M_2}{k_2^i}\|\psi_1-\psi_2\|_{\infty}+\frac{(M_2)^2}{(k_2^i)^2}\|\phi_1-\phi_2\|_{\infty}.
\end{align*}}

Thus, we get 
{\small\begin{align*}
  |\Upsilon_2(\phi_1,\psi_1)(t)-\Upsilon_2(\phi_2,\psi_2)(t)| \leq&{\displaystyle{\int_t^{+\infty} e^{\int^t_s a_2(r)dr}c_2(s)}}\big\{ \frac{2M_2}{k_2^i}\|\psi_1-\psi_2\|_{\infty}+\frac{M_2^2}{(k_2^i)^2}\|\phi_1-\phi_2\|_{\infty}\big\}\text{ }ds\\
  \leq&{\displaystyle{\int_t^{+\infty} e^{a_2^i (t-s)}}}\big\{ \frac{2c_2^sM_2}{k_2^i}\|\psi_1-\psi_2\|_{\infty}+\frac{c_2^sM_2^2}{(k_2^i)^2}\|\phi_1-\phi_2\|_{\infty}\big\}\text{ }ds\\
  =&\frac{\beta}{a_2^i}\|\psi_1-\psi_2\|_{\infty}+\frac{\alpha}{a_2^i}\|\phi_1-\phi_2\|_{\infty} \\
  \leq& \epsilon.
  \end{align*}}
   By conclusion, $\Upsilon_2$ is a continuous operator.
  \\
  
Next, we need to prove that $\Upsilon$ is a compact operator.
In fact,  we show that the following two statements are true : \begin{enumerate}
\item[1)]   $\{\Upsilon(\phi,\psi):  (\phi,\psi)\in M\}\subset BC(\R,\R^2)$ is equi-continuous.
\item[2)]$\{\Upsilon(\phi,\psi)(t); (\phi,\psi)\in M\}$ is relatively compact subset of $\R^2$ for each $t\in \R$.\end{enumerate}

To prove 1), given $u,v\in \R$, such that $u<v$ and $|u-v| < \epsilon$, we have

\begin{align*}
\bigg|\Upsilon_1(\phi,\psi)(u)-\Upsilon_1(\phi,\psi)(v)\bigg|=&\bigg|{\displaystyle{\int_u^{+\infty} e^{\int^u_s a_1(r)dr}}}\left[b(s) \phi^2(s)+\frac{c_1(s)\psi(s-\tau_1(s))\psi(s)}{\phi(s-\sigma_1(s))+k_1(s)}\right]\text{ } ds\\
&{\displaystyle{-\int_v^{+\infty} e^{\int^v_s a_1(r)dr}}}\left[b(s) \phi^2(s)+\frac{c_1(s)\psi(s-\tau_1(s))\psi(s)}{\phi(s-\sigma_1(s))+k_1(s)}\right]\text{ } ds\bigg|\\
 =&\bigg|{\displaystyle{\int_u^{+\infty} e^{\int^u_s a_1(r)dr}}}\left[b(s) \phi^2(s)+\frac{c_1(s)\psi(s-\tau_1(s))\psi(s)}{\phi(s-\sigma_1(s))+k_1(s)}\right]\text{ } ds\\
&{\displaystyle{-\int_v^{u} e^{\int^v_s a_1(r)dr}}}\left[b(s) \phi^2(s)+\frac{c_1(s)\psi(s-\tau_1(s))\psi(s)}{\phi(s-\sigma_1(s))+k_1(s)}\right]\text{ } ds\\
 &{\displaystyle{-e^{\int^v_u a_1(r)dr}\int_u^{+\infty} e^{\int^u_s a_1(r)dr}}}\bigg[b(s) \phi^2(s)+\frac{c_1(s)\psi(s-\tau_1(s))\psi(s)}{\phi(s-\sigma_1(s))+k_1(s)}\bigg]\text{ } ds\bigg|\\
 \leq&{\bigg|1-e^{\int^v_u a_1(r)dr}\bigg|\displaystyle{\int_u^{+\infty} e^{\int^u_s a_1(r)dr}}}\left[b(s) \phi^2(s)+\frac{c_1(s)\psi(s-\tau_1(s))\psi(s)}{\phi(s-\sigma_1(s))+k_1(s)}\right]\text{ } ds\\
&{\displaystyle{+\int_v^{u} e^{\int^v_s a_1(r)dr}}}\left[b(s) \phi^2(s)+\frac{c_1(s)\psi(s-\tau_1(s))\psi(s)}{\phi(s-\sigma_1(s))+k_1(s)}\right]\text{ } ds
\end{align*}

Or $\int^v_u a_1(r)dr>0 \Rightarrow e^{\int^v_u a_1(r)dr}>1$. Therefore

\begin{align*}
\bigg|\Upsilon_1(\phi,\psi)(u)-\Upsilon_1(\phi,\psi)(v)\bigg|\leq&\left(e^{\int^v_u a_1(r)dr}-1\right){\displaystyle{\int_u^{+\infty} e^{\int^u_s a_1(r)dr}}}\left[b(s) \phi^2(s)+\frac{c_1(s)\psi(s-\tau_1(s))\psi(s)}{\phi(s-\sigma_1(s))+k_1(s)}\right]\text{ } ds\\
&{\displaystyle{+\int_{u}^v e^{\int^v_s a_1(r)dr}}}\left[b(s) \phi^2(s)+\frac{c_1(s)\psi(s-\tau_1(s))\psi(s)}{\phi(s-\sigma_1(s))+k_1(s)}\right]\text{ } ds\\
\leq&\left(e^{a_1^s(v-u)}-1\right){\displaystyle{\int_u^{+\infty} e^{\int^u_s a_1(r)dr}}}\left[b(s) \phi^2(s)+\frac{c_1(s)\psi(s-\tau_1(s))\psi(s)}{\phi(s-\sigma_1(s))+k_1(s)}\right]\text{ } ds\\
&{\displaystyle{+\int_{u}^v e^{\int^v_s a_1(r)dr}}}\left[b(s)\phi^2(s)+\frac{c_1(s)\psi(s-\tau_1(s))\psi(s)}{\phi(s-\sigma_1(s))+k_1(s)}\right]\text{ } ds\\
\leq&\dfrac{e^{a_1^s(v-u)}-1}{a^i_1}\left[b^s M_1^2+\frac{c_1^sM_1M_2}{k_1^s}\right]+\dfrac{e^{a_1^s(v-u)}-1}{a^i_1}\left[b^sM_1^2+\frac{c_1^sM_1M_2}{k_1^s}\right].
\end{align*}
 Or \begin{equation}\label{exp0}
 e^{a_1^s(v-u)}-1\underset{u\rightarrow v}{\rightarrow}0.
 \end{equation} Then
 $$\bigg|\Upsilon_1(\phi,\psi)(u)-\Upsilon_1(\phi,\psi)(v)\bigg|\underset{u\rightarrow v}{\rightarrow}0.$$
 And
\begin{align*}\left|\Upsilon_2(\phi,\psi)(u)-\Upsilon_2(\phi,\psi)(v)\right|=&\bigg|{\displaystyle{\int_u^{+\infty} e^{\int^u_s a_2(r)dr}}}\frac{c_2(s)\psi(s-\tau_2(s))\psi(s)}{\phi(s-\sigma_2(s))+k_2(s)}ds\\&{\displaystyle{-\int_v^{+\infty} e^{\int^v_s a_2(r)dr}}}\frac{c_2(s)\psi(s-\tau_2(s))\psi(s)}{\phi(s-\sigma_2(s))+k_2(s)}ds\bigg|\\
\leq&\left|1-e^{\int^v_u a_2(r)dr}\right|{\displaystyle{\int_u^{+\infty} e^{\int^u_s a_2(r)dr}}}\frac{c_2(s)\psi(s-\tau_1(s))\psi(s)}{\phi(s-\sigma_1(s))+k_2(s)}\text{ } ds\\
&{\displaystyle{+\int_{u}^v e^{\int^v_s a_2(r)dr}}}\frac{c_2(s)\psi(s-\tau_1(s))\psi(s)}{\phi(s-\sigma_1(s))+k_2(s)}\text{ } ds\\
\leq&2\left(e^{a_2^s(v-u)}-1\right)
\frac{c_2^sM_1M_2}{a^i_2k_2^s}.
\end{align*}
Then from \ref{exp0}
 $$\left|\Upsilon_2(\phi,\psi)(u)-\Upsilon_2(\phi,\psi)(v)\right|\underset{u\rightarrow v}{\rightarrow}0.$$
 which shows that 1) holds.\\
 \qquad For the  2) statement, given any $t\in \R$ and for any $(\phi,\psi)\in M$, we get 
  \begin{align*}
  \Upsilon_1(\phi,\psi)(t) \leq M_1\text{   And  }
 \Upsilon_2(\phi,\psi)(t) \leq M_2.
  \end{align*}
Pose now $\rho=max\left(M_1,M_2\right)$. Therefore $\Upsilon(\phi,\psi)(t)\in B(0,\rho)$ which proves the result.\\
Denote the closed convex hull of $\Upsilon \mathcal{M}$ by $\overline{co}\Upsilon \mathcal{M}$. Since $\Upsilon \mathcal{M}\subseteq \mathcal{M}$ and $\mathcal{M}$ is closed  convex,  $\overline{co}\Upsilon \mathcal{M}\subseteq \mathcal{M}$.  Thus 
$\Upsilon (\overline{co}\Upsilon \mathcal{M})\subseteq\Upsilon \mathcal{M}\subseteq \overline{co}\Upsilon \mathcal{M}$.  It  is  easy  to verify that $\overline{co}\Upsilon \mathcal{M}$ has the properties   1) and   2). More explicitly, $\{\Upsilon(\phi,\psi)(t);(\phi,\psi)\in \overline{co}\Upsilon \mathcal{M}\}$ is relatively compact $\R^2$ for each $t\in \R$, and $\overline{co}\Upsilon \mathcal{M}\subseteq BC(\R^2)$  is
 uniformly bounded and equi-continuous. By the Arzela-Ascoli theorem \cite{diagana2013almost}, the restriction of $\overline{co}\Upsilon \mathcal{M}$ to every bounded interval $I$ of $\R$, namely $\{\Upsilon(\phi,\psi)(t);(\phi,\psi)\in \overline{co}\Upsilon \mathcal{M}\}_{t\in I}$,  is  relatively  compact  in $C(I,\R^2)$.  Thus, $\Upsilon
 :\overline{co}\Upsilon \mathcal{M}\rightarrow\overline{co}\Upsilon\mathcal{M}$ is  a  compact  operator. It follows from Schauder's fixed point theorem that $\Upsilon$ has a fixed point $(u^*,v^*)$ in $\mathcal{M}$. The proof is complete.
  \end{proof}
   \section{Stability of the p.a.p solution }\hspace{1cm} Before the stability theorem, we need the following lemma 
\begin{lemma}\cite{gopalsamy1992stability}Let $f$ be a non-negative function defined on $[0;+\infty[$ such that $f$ is integrable on $[0;+\infty[$ and is uniformly continuous on $[0;+\infty[$. Then $$\lim_{t\rightarrow +\infty} f(t) =0.$$\end{lemma}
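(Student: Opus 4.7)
The plan is to argue by contradiction. Suppose that $f(t)$ does \emph{not} tend to $0$ as $t\to+\infty$. Then $\limsup_{t\to+\infty} f(t) > 0$, so there exist $\varepsilon_0 > 0$ and a sequence $t_n\to+\infty$ with $f(t_n) \geq 2\varepsilon_0$ for every $n$. By extracting a subsequence, I may further arrange $t_{n+1} - t_n \geq 1$ so that the short intervals I am about to construct are pairwise disjoint.

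Next I would invoke uniform continuity on $[0,+\infty[$: pick $\delta \in (0,1)$ such that $|s-t|\leq \delta$ implies $|f(s)-f(t)|\leq \varepsilon_0$. For every $n$ and every $t\in[t_n,t_n+\delta]$ this gives
$$ f(t) \;\geq\; f(t_n) - |f(t)-f(t_n)| \;\geq\; 2\varepsilon_0 - \varepsilon_0 \;=\; \varepsilon_0. $$
Since $f\geq 0$ and the intervals $[t_n,t_n+\delta]$ are pairwise disjoint,
$$ \int_0^{+\infty} f(t)\,dt \;\geq\; \sum_{n\geq 1}\int_{t_n}^{t_n+\delta} f(t)\,dt \;\geq\; \sum_{n\geq 1}\varepsilon_0\,\delta \;=\; +\infty, $$
which contradicts the integrability of $f$ on $[0,+\infty[$. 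Hence $\lim_{t\to+\infty} f(t)=0$.

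This is the classical Barbalat argument, and there is no real obstacle. The only small point requiring care is the extraction of the subsequence $(t_n)$ so that the intervals $[t_n,t_n+\delta]$ are genuinely disjoint; once that is arranged, the lower bound for the integral is a straightforward telescoping sum that forces divergence.
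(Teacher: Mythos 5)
Your proof is correct and complete: the contradiction via uniform continuity, the uniform lower bound $f\geq\varepsilon_0$ on the disjoint intervals $[t_n,t_n+\delta]$, and the resulting divergence of the integral constitute the standard Barbalat argument, which is precisely the proof found in the cited reference. Note that the paper itself offers no proof of this lemma (it is simply quoted from Gopalsamy), so there is nothing to compare against beyond observing that you have supplied the classical argument in full; the only cosmetic quibble is that the final lower bound is a sum of identical positive terms rather than a ``telescoping sum.''
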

   \begin{definition} If $(u^*,v^*)$ is a pseudo almost periodic solution of system \eqref{prey-predatorII}, and $(u,v)$ is an any solution of \eqref{prey-predatorII} satisfying $\lim_{t\rightarrow+\infty}\big|u(t)-u^*(t)\big| =\lim_{t\rightarrow+\infty}\big|v(t)-v^*(t)\big|= 0$, then we call this 
solution $(u^*,v^*)$ is globally attractive.
\end{definition}

  \begin{theorem}\label{stabilityprey}
  Assume that $$\liminf \alpha(t),\liminf \beta(t)>0.$$
  where $\alpha(t)$ and $ \beta(t)$ are defined in \ref{alpha} and \ref{beta}. 
The pseudo almost periodic solution $(u^*,v^*)$  is globally attractive.
  \end{theorem}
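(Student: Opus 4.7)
The plan is to exhibit a Lyapunov functional whose upper right Dini derivative is controlled, along the two solutions, by $-\alpha(t)\lvert u(t)-u^{*}(t)\rvert-\beta(t)\lvert v(t)-v^{*}(t)\rvert$, and then to combine the assumption $\liminf\alpha(t),\liminf\beta(t)>0$ with the Barb\u{a}lat--type lemma recalled just before the statement. The uniform permanence obtained in Theorem~\ref{uniformpermanent} will be used throughout to bound the denominators $u(t-\sigma_{i}(t))+k_{i}(t)$ and $u^{*}(t-\sigma_{i}(t))+k_{i}(t)$ from below and the populations from above.

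First I would write both systems in logarithmic form and set $V_{1}(t)=\lvert\ln u(t)-\ln u^{*}(t)\rvert$ and $V_{2}(t)=\lvert\ln v(t)-\ln v^{*}(t)\rvert$. Using the Dini derivative and the elementary identity $D^{+}\lvert f\rvert=\operatorname{sgn}(f)f'$, the prey part gives
\begin{equation*}
D^{+}V_{1}(t)\le -b(t)\lvert u(t)-u^{*}(t)\rvert+c_{1}(t)\left\lvert\frac{v(t-\tau_{1}(t))}{u(t-\sigma_{1}(t))+k_{1}(t)}-\frac{v^{*}(t-\tau_{1}(t))}{u^{*}(t-\sigma_{1}(t))+k_{1}(t)}\right\rvert,
\end{equation*}
and an analogous bound holds for $D^{+}V_{2}(t)$. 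Splitting the difference of the fractions in the standard way, adding and subtracting the mixed term, and using $u,u^{*}\ge m_{1}$, $v,v^{*}\le M_{2}$, etc., each delayed term $\lvert u(t-\sigma_{i}(t))-u^{*}(t-\sigma_{i}(t))\rvert$ and $\lvert v(t-\tau_{i}(t))-v^{*}(t-\tau_{i}(t))\rvert$ can be bounded by a constant times itself, with explicit coefficients depending on $c_{i}^{s}, M_{1,2}, m_{1,2}, k_{i}^{i}$.

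Next I would compensate these delayed terms by introducing four auxiliary integrals of the form $\displaystyle\int_{t-\tau_{i}(t)}^{t}K_{i}\,\lvert v(s)-v^{*}(s)\rvert\,ds$ and $\displaystyle\int_{t-\sigma_{i}(t)}^{t}L_{i}\,\lvert u(s)-u^{*}(s)\rvert\,ds$, and set
\begin{equation*}
V(t)=V_{1}(t)+V_{2}(t)+\sum_{i=1}^{2}\int_{t-\tau_{i}(t)}^{t}K_{i}\lvert v(s)-v^{*}(s)\rvert\,ds+\sum_{i=1}^{2}\int_{t-\sigma_{i}(t)}^{t}L_{i}\lvert u(s)-u^{*}(s)\rvert\,ds,
\end{equation*}
with $K_{i},L_{i}$ chosen so that the non-delayed contributions produced when differentiating the integrals exactly cancel the delayed moduli appearing in $D^{+}(V_{1}+V_{2})$. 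After this cancellation the estimate takes the form $D^{+}V(t)\le -\alpha(t)\lvert u(t)-u^{*}(t)\rvert-\beta(t)\lvert v(t)-v^{*}(t)\rvert$, which is precisely where the coefficients $\alpha(t)$ and $\beta(t)$ mentioned in the statement (and defined in \ref{alpha}, \ref{beta}) enter.

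Finally, using $\liminf\alpha(t)>0$ and $\liminf\beta(t)>0$, pick $\alpha_{0},\beta_{0}>0$ and $T\ge t_{0}$ with $\alpha(t)\ge\alpha_{0}$ and $\beta(t)\ge\beta_{0}$ for $t\ge T$; integration then yields
\begin{equation*}
\int_{T}^{+\infty}\bigl(\alpha_{0}\lvert u(t)-u^{*}(t)\rvert+\beta_{0}\lvert v(t)-v^{*}(t)\rvert\bigr)\,dt\le V(T)<+\infty,
\end{equation*}
so $\lvert u-u^{*}\rvert$ and $\lvert v-v^{*}\rvert$ are integrable on $[T,+\infty[$. Uniform continuity of these two functions follows from the uniform boundedness of $u',(u^{*})',v',(v^{*})'$, itself a consequence of Theorem~\ref{uniformpermanent} applied to both solutions. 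Invoking the Barb\u{a}lat-type lemma quoted just before the theorem then gives $\lvert u(t)-u^{*}(t)\rvert\to 0$ and $\lvert v(t)-v^{*}(t)\rvert\to 0$ as $t\to+\infty$, which is the global attractivity.

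The main obstacle I anticipate is the second step: choosing the compensating constants $K_{i},L_{i}$ so that the algebraic identification with $\alpha(t),\beta(t)$ of \ref{alpha}--\ref{beta} is exact rather than only up to a positive multiplicative constant; this requires tracking the $1/(u^{*}(t-\sigma_{i}(t))+k_{i}(t))$ factors carefully and using $m_{1},M_{1},m_{2},M_{2}$ in the sharp direction on each term.
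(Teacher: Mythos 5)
Your overall architecture matches the paper's: a Lyapunov functional built from $\lvert\ln u-\ln u^{*}\rvert+\lvert\ln v-\ln v^{*}\rvert$, integral correction terms to absorb the delays, a differential inequality $D^{+}W\le-\alpha(t)\lvert w_{1}\rvert-\beta(t)\lvert w_{2}\rvert$, integration, and a Barb\u{a}lat-type conclusion. The final step (integrability of $\lvert u-u^{*}\rvert$, $\lvert v-v^{*}\rvert$ plus uniform continuity) is fine and, if anything, more carefully stated than in the paper.

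However, the compensation mechanism you propose is not the one that produces the $\alpha(t),\beta(t)$ of \eqref{alpha}--\eqref{beta}, and the obstacle you flag at the end is a genuine gap rather than a bookkeeping issue. Two problems. First, differentiating $\int_{t-\tau_{i}(t)}^{t}K_{i}\lvert v(s)-v^{*}(s)\rvert\,ds$ produces $K_{i}\lvert w_{2}(t)\rvert-K_{i}\bigl(1-\tau_{i}'(t)\bigr)\lvert w_{2}(t-\tau_{i}(t))\rvert$; to cancel the delayed modulus coming from $D^{+}(V_{1}+V_{2})$ you need $\inf_{t}(1-\tau_{i}'(t))>0$, an assumption the paper does not make available to you (and you would still be left with a delay-independent condition of a different algebraic form). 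Second, and more decisively, the coefficients $\alpha(t),\beta(t)$ in the statement contain the factors $(\zeta_{i}^{-1}(t)-t)$ and $(\varsigma_{i}^{-1}(t)-t)$, i.e.\ the lengths of the delay intervals. Constant-kernel single integrals can never generate such factors. The paper obtains them by a different device: it writes each delayed difference via Newton--Leibniz as $w(\eta(t))=w(t)-\int_{\eta(t)}^{t}w'(s)\,ds$, substitutes the system \eqref{prey-predatorII} back into $w_{1}'$ and $w_{2}'$ inside those integrals (equations \eqref{w'1}--\eqref{w'2}), and then compensates the resulting single integrals with \emph{double} integrals of the form $\int_{t}^{\zeta_{i}^{-1}(t)}\int_{\zeta_{i}(u)}^{t}\lvert w(s)\rvert\,ds\,du$; differentiating these yields exactly the terms $\lvert w(t)\rvert\,(\zeta_{i}^{-1}(t)-t)-\int_{\zeta_{i}(t)}^{t}\lvert w(s)\rvert\,ds$ that cancel the leftovers and deposit the $(\zeta_{i}^{-1}(t)-t)$ factors into $\alpha,\beta$. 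Without this "inject the equation back into the delay" step your plan proves at best a variant theorem under different hypotheses, not the stated one; you should replace the single-integral Krasovskii terms by this construction.
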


  \begin{proof}\item
  Suppose $(u, v)$ is an any solution of the system \eqref{prey-predatorII}. Define the Lyapunov functional as follows
$$W_1 (t) = \big |\ln u(t)-\ln u^*(t)\big| + \big|\ln v(t)-\ln v^*(t)\big|.$$
Pose that $w_1=u-u^*$, $w_2=v-v^*$. And define $\zeta_i^{-1}$, $\varsigma^{-1}_i$ are the inverse functions of $\varsigma_i=t-\sigma_i(t)$, $\zeta_i=t-\tau_i(t)$, respectively $i=1,2$.\\
By calculating Dini derivative of $V$ along system \eqref{prey-predatorII}, we get
\begin{equation}\label{D+W1}
\begin{aligned}
 D^+ W_1(t)=& sgn(w_1(t))\left[\frac{u'(t)}{u(t)} -\frac{u^{*'} (t)}{u^*(t)} \right]+sgn(w_2(t))\left[\frac{v'(t)}{v(t)} -\frac{v^{*'} (t)}{v^*(t)} \right]\\
=& sgn(w_1(t))\left[-b_1(t)w_1(t)-c_1(t)\left(\frac{v(\zeta_1(t))}{u(\varsigma_1(t))+k_1(t)} -\frac{v^*(\zeta_1(t))}{u^*(\varsigma_1(t)+k_1(t)} \right)\right]\\
&-sgn(w_2(t))c_2(t)\left[\frac{v(\zeta_1(t))}{u(\varsigma_2(t)+k_2(t)} -\frac{v^*(\zeta_2(t))}{u^*(\varsigma_2(t))+k_2(t)} \right]\\
=& -b_1(t)\left|w_1(t)\right|-\frac{c_1(t) sgn(w_1(t))w_2(\zeta_1(t))}{u(\varsigma_1(t))+k_1(t)}+\frac{c_1(t)sgn(w_1(t))v^*(\zeta_1(t))w_1(\varsigma_1(t))}{(u(\varsigma_1(t))+k_1(t))(u^*(\varsigma_1(t))+k_1(t))}\\
&-\frac{c_2(t) sgn(w_2(t))w_2(\zeta_2(t))}{u(\varsigma_2(t))+k_2(t)}+\frac{c_2(t) sgn(w_2(t))v^*(\zeta_2(t))w_1(\varsigma_2(t))}{(u(\varsigma_2(t))+k_2(t))(u^*(\varsigma_2(t))+k_2(t))}\\
=& -b_1(t)\left|w_1(t)\right|-c_1(t) \frac{sgn(w_1(t))w_2(t)}{u(\varsigma_1(t))+k_1(t)} +c_1(t) \frac{sgn(w_1(t))}{u(\varsigma_1(t))+k_1(t)} {\displaystyle\int^t_{\zeta_1(t)}}w'_2(s)ds\\
&+\frac{c_1(t) v^*(\zeta_1(t))sgn(w_1(t))}{(u(\varsigma_1(t))+k_1(t))(u^*(\varsigma_1(t))+k_1(t))}\left[w_1(t)- {\displaystyle \int^t_{\varsigma_1(t)}w'_1(s)}ds\right]\\
&-c_2(t) \frac{sgn(w_2(t))}{u(\varsigma_2(t))+k_2(t)}\left[w_2(t)-\displaystyle\int^t_{\zeta_2(t)}w'_2(s)ds\right]\\
&+\frac{c_2(t) v^*(\zeta_2(t))sgn(w_2(t))}{(u(\varsigma_2(t))+k_2(t))(u^*(\varsigma_2(t))+k_2(t))}\left[w_1(t)- \displaystyle \int^t_{\varsigma_2(t)}w'_1(s)ds\right]\end{aligned}
\end{equation}

Integrating both sides of $w'_1(t)$ on the interval $[\varsigma_j(t),t]$ where j=1 or =2, we have
\begin{equation}\label{w'1}\begin{aligned}
\displaystyle\int^t_{\varsigma_j(t)}w'_1(s)ds=&\displaystyle\int^t_{\varsigma_j(t)}u(s)\left[a_1(s)-b_1(s)u(s)-\dfrac{c_1(s)v(\zeta_1(s))u(s)}{u(\varsigma_1(s))+k_1(s)}\right]\\
&-u^*(s)\bigg[a_1(s)-b_1(s)u^*(s)-\dfrac{c_1(s)v^*(\zeta_1(s))u^*(s)}{u^*(\varsigma_1(s))+k_1(s)}\bigg]ds\\
=&\displaystyle\int^t_{\varsigma_j(t)}a_1(s)w_1(s)-b_1(s)(u(s)+u^*(s)w_1(s)-\dfrac{c_1(t)u(s)}{u(\varsigma_1(s))+k_1(s)}w_2(\zeta_1(s))\\
&-\dfrac{c_1(s)v^*(\zeta_1(s))}{u(\varsigma_1(s))+k_1(s)}w_1(s)+\dfrac{c_1(s)v^*(\zeta_1(s))u^*(s)}{(u(\varsigma_1(s))+k_1(s))(u^*(\varsigma_1(s))+k_1(s))}w_1(\varsigma_1(s))ds
\end{aligned}\end{equation}

and Integrating both sides of $w'_2(t)$ on the interval $[\zeta_j(t),t]$ where j=1 or =2, we have
\begin{equation}\label{w'2}\begin{aligned}
\displaystyle\int^t_{\zeta_j(t)}w'_2(s)ds=&\displaystyle\int^t_{\zeta_j(t)}v(s)\bigg[a_2(s)-\dfrac{c_2(s)v(\zeta_2(s))v(s)}{u(\varsigma_2(s))+k_2(s)}\bigg]-v^*(s)\bigg[a_2(s)-\dfrac{c_2(s)v^*(\zeta_2(s))v^*(s)}{u^*(\varsigma_2(s))+k_2(s)}\bigg]ds\\
=&\displaystyle\int^t_{\zeta_j(t)}a_2(s)w_2(s)-\dfrac{c_2(t)v(s)}{u(\varsigma_2(s))+k_2(s)}w_2(\zeta_1(s))\\&-\dfrac{c_2(s)v^*(\zeta_2(s))}{u(\varsigma_2(s))+k_2(s)}w_2(s)+\dfrac{c_2(s)v^*(\zeta_2(s))v^*(s)}{(u(\varsigma_2(s))+k_2(s))(u^*(\varsigma_2(s))+k_2(s))}w_1(\varsigma_2(s))ds
\end{aligned}\end{equation}
From theorem \ref{uniformpermanent}, we have for all solution  $(u,v)$ of system \eqref{prey-predatorII} $\exists M_1>m_1\geq 0,M_2>m_2>0$ such that 
$$m_1\leq u(t)\leq M_1;\quad m_2\leq v(t)\leq M_2\qquad\forall t\in \R.$$
Therefore, By substituting \eqref{w'1}–\eqref{w'2} into \eqref{D+W1}, we get
{\small\begin{equation}\label{D++W1}\begin{aligned}
D^+W_1(t)\leq& -\left(b_1^i-\dfrac{c_1^sM_2}{(m_1+k_1^i)^2}-\dfrac{c_2^sM_2}{(m_1+k_2^i)^2}\bigg)\big |w_1(t)\big|-\bigg( \dfrac{c_2 ^i}{M_1+k_2^s}- \dfrac{c_1^s}{m_1+k_1^i}\right)\big|w_2(t)\big|\\
&
+\dfrac{c_1^s}{m_1+k_1^i}\bigg[a_2^s+\dfrac{c_2^sM_2}{m_1+k_2^i}\bigg]\displaystyle \int^t_{\zeta_1(t)}\big|w_2(s)\big|ds+\dfrac{c_1^sc_2^sM_2}{(m_1+k_1^i)(m_1+k_2^i)}\displaystyle \int^t_{\zeta_1(t)}\big|w_2(\zeta_1(s))\big|ds\\
&
+\dfrac{c_1^sc_2^sM_2^2}{(m_1+k_1^i)(m_1+k_2^i)^2}\displaystyle \int^t_{\zeta_1(t)}\big|w_1(\varsigma_2(s))\big|ds+\displaystyle\dfrac{(c_1^s)^2M_1M_2^2}{(m_1+k_1^i)^4}\int^t_{\varsigma_1(t)}\big|w_1(\varsigma_1(s))\big|ds\\
&\displaystyle +\dfrac{(c_1^s)^2M_1M_2}{(m_1+k_1^i)^3}\int^t_{\varsigma_1(t)}\big|w_2(\zeta_1(s))\big|ds
 +\bigg[\dfrac{c_1^s  a_1^sM_2}{(m_1+k_1^i)^2}+ \dfrac{2c_1^sb_1^sM_1M_2}{(m_1+k_1^i)^2}+\dfrac{(c_1^s)^2M_2^2}{(m_1+k_1^i)^3}\bigg]\displaystyle \int^t_{\varsigma_1(t)}\big|w_1(s)\big|ds\\
&+\bigg[\dfrac{c_2^sa_2^s}{m_1+k_2^i}+ \dfrac{(c_2^s)^2M_2}{(m_1+k_2^i)^2}\bigg]\displaystyle\int^t_{\zeta_2(t)}\big|w_2(s)\big|ds+\dfrac{(c_2^s)^2M_2}{(m_1+k_2^i)^2}\int^t_{\zeta_2(t)}\big|w_2(\zeta_2(s))\big|ds\\
&
+ \bigg[\dfrac{2c_2^sb_1^sM_1M_2}{(m_1+k_2^i)^2}+\dfrac{c_2^sc_1^sM_2^2}{(m_1+k_2^i)^2(m_1+k_1^i)^2}+\dfrac{c_2^sa_1^sM_2}{(m_1+k_2^i)^2}\bigg]\int^t_{\varsigma_2(t)}\big|w_1(s)\big|ds\\
&\displaystyle+\dfrac{c_2^sM_1M_2}{(m_1+k_2^i)^2(m_2+k_1^i)}\int^t_{\varsigma_2(t)}\big|w_2(\zeta_1(s))\big|ds
+\dfrac{c_2^sc_1^sM_1M_2^2}{(m_1+k_2^i)^2(m_1+k_1^i)^2}\int^t_{\varsigma_2(t)}\big|w_1(\varsigma_1(s))\big|ds\\
&+\displaystyle\dfrac{(c_2^s)^2M_2^2}{(m_1+k_2^i)^3}\int^t_{\zeta_2(t)}\big|w_1(\varsigma_2(s))\big|ds
\end{aligned}\end{equation}}
Now, Let{\small
\begin{align*}W_2(t)=&\dfrac{c_1^s}{m_1+k_1^i}\bigg[a_2^s+\dfrac{c_2^sM_2}{m_1+k_2^i}\bigg]\displaystyle \int_t^{\zeta_1^{-1}(t)}\int^t_{\zeta_1(u)}\big|w_2(s)\big|dsdu+\dfrac{c_2^sa_1^sM_2}{(m_1+k_2^i)^2}\bigg]\int_t^{\varsigma_2^{-1}(t)}\int^t_{\varsigma_2(u)}\big|w_1(s)\big|dsdu\\
&+\dfrac{c_1^sc_2^sM_2^2}{(m_1+k_1^i)(m_1+k_2^i)^2}\displaystyle\int_t^{\zeta_1^{-1}(t)} \int^t_{\zeta_1(u)}\big|w_1(\varsigma_2(s))\big|dsdu\displaystyle +\dfrac{(c_1^s)^2M_1M_2}{(m_1+k_1^i)^3}\int_t^{\varsigma_1^{-1}(t)}\int^t_{\varsigma_1(u)}\big|w_2(\zeta_1(s))\big|dsdu\\
&+\bigg[\dfrac{c_1^s a_1^sM_2}{(m_1+k_1^i)^2}+\displaystyle \dfrac{2c_1^sb_1^sM_1M_2}{(m_1+k_1^i)^2}+\dfrac{(c_1^s)^2M_2^2}{(m_1+k_1^i)^3}\bigg]\displaystyle \int_t^{\varsigma_1^{-1}(t)}\int^t_{\varsigma_1(u)}\big|w_1(s)\big|dsdu
\\
& +\displaystyle\dfrac{(c_1^s)^2M_1M_2^2}{(m_1+k_1^i)^4}\int_t^{\varsigma_1^{-1}(t)}\int^t_{\varsigma_1(u)}\big|w_1(\varsigma_1(s))\big|dsdu+\dfrac{c_2^sa_2^s}{m_1+k_2^i}\int_t^{\zeta_2^{-1}(t)}\int^t_{\zeta_2(u)}\big|w_2(s)\big|dsdu\\
&+\displaystyle\dfrac{(c_2^s)^2M_2}{(m_1+k_2^i)^2}\int_t^{\zeta_2^{-1}(t)}\int^t_{\zeta_2(u)}\big|w_2(\zeta_2(s))\big|dsdu
+ \dfrac{(c_2^s)^2M_2}{(m_1+k_2^i)^2}\int_t^{\zeta_2^{-1}(t)}\int^t_{\zeta_2(u)}\big|w_2(s)\big|dsdu\\
&+\displaystyle\dfrac{(c_2^s)^2M_2^2}{(m_1+k_2^i)^3}\int_t^{\zeta_2^{-1}(t)}\int^t_{\zeta_2(u)}\big|w_1(\varsigma_2(s))\big|dsdu+\dfrac{c_2^sM_1M_2}{(m_1+k_2^i)^2(m_1+k_1^i)}\int_t^{\varsigma_2(t)}\int^t_{\varsigma_2(u)}\big|w_2(\zeta_1(s))\big|dsdu\\
&+\displaystyle \bigg[\dfrac{2c_2^sb_1^sM_1M_2}{(m_1+k_2^i)^2}+\dfrac{c_2^sc_1^sM_2^2}{(m_1+k_2^i)^2(m_1+k_1^i)^2}+\dfrac{c_1^sc_2^sM_2}{(m_1+k_1^i)(m_1+k_2^i)}\displaystyle \int_t^{\zeta_1^{-1}(t)}\int^t_{\zeta_1(u)}\big|w_2(\zeta_1(s))\big|dsdu\\
&\displaystyle
+\dfrac{c_2^sc_1^sM_1M_2^2}{(m_1+k_2^i)^2(m_1 +k_1^i)^2}\int_t^{\varsigma_2(t)}\int^t_{\varsigma_2(u)}\big|w_1(\varsigma_1(s))\big|dsdu
\end{align*}}
then its derivative is as follows:
{\small\begin{equation}\label{W'2}\begin{aligned}W_2'(t)=&\displaystyle\dfrac{c_1^s}{m_1+k_1^i}\bigg[a_2^s+\dfrac{c_2^sM_2}{m_1+k_2^i}\bigg] 
\bigg[\big|w_2(t)\big|(\zeta_1^{-1}(t)-t)-\int^t_{\zeta_1(t)}\big|w_2(s)\big|ds\bigg]\\
&+\displaystyle\dfrac{c_1^sc_2^sM_2}{(m_1+k_1^i)(m_1+k_2^i)} \bigg[\big|w_2(\zeta_1(t))\big|(\zeta_1^{-1}(t)-t)-\int^t_{\zeta_1(t)}\big|w_2(\zeta_1(s))\big|ds\bigg]\\
&+\displaystyle\dfrac{c_1^sc_2^sM_2^2}{(m_1+k_1^i)(m_1+k_2^i)^2}\bigg[\big|w_1(\varsigma_2(t))\big|(\zeta_1^{-1}(t)-t)-\int^t_{\zeta_1(t)}\big|w_1(\varsigma_2(s))\big|ds\bigg]\\
&\displaystyle +\dfrac{(c_1^s)^2M_1M_2}{(m_1+k_1^i)^3}\bigg[\big|w_2(\zeta_1(t))\big|(\varsigma_1^{-1}(t)-t)-\int^t_{\varsigma_1(t)}\big|w_2(\zeta_1(s))\big|ds\bigg]\\
&+\displaystyle\bigg[\dfrac{c_1^s a_1^sM_2}{(m_1+k_1^i)^2}+ \dfrac{2c_1^sb_1^sM_1M_2}{(m_1+k_1^i)^2}+\dfrac{(c_1^s)^2M_2^2}{(m_1+k_1^i)^3}\bigg]\displaystyle \bigg[\big|w_1(t)\big|(\varsigma_1^{-1}(t)-t)-\int^t_{\varsigma_1(t)}\big|w_1(s)\big|ds\bigg]
\\
&+\displaystyle\dfrac{(c_1^s)^2M_1M_2^2}{(m_1+k_1^i)^4}\bigg[\big|w_1(\varsigma_1(t))\big|(\varsigma_1^{-1}(t)-t)-\int^t_{\varsigma_1(t)}\big|w_1(\varsigma_1(s))\big|ds\bigg]\\
 &+\displaystyle\bigg[\dfrac{c_2a_2^s}{m_1+k_2^i}+\dfrac{(c_2^s)^2M_2}{(m_1+k_2^i)^2}\bigg]\bigg[\big|w_2(t)\big|(\zeta_2^{-1}(t)-t)-\int^t_{\zeta_2(t)}\big|w_2(s)\big|ds\bigg]\\
&+\displaystyle\dfrac{(c_2^s)^2M_2}{m_1+k_2^i}\bigg[\big|w_2(\zeta_2(t))\big|(\zeta_2^{-1}(t)-t)-\int^t_{\zeta_2(t)}\big|w_2(\zeta_2(s))\big|ds\bigg]
\\
&+\displaystyle\dfrac{(c_2^s)^2M_2^2}{(m_1+k_2^i)^3}\bigg[\big|w_1(\varsigma_2(t))\big|(\zeta_2^{-1}(t)-t)-\int^t_{\zeta_2(t)}\big|w_1(\varsigma_2(s))\big|ds\bigg]\\&
+\displaystyle\dfrac{c_2^sM_1M_2}{(m_1+k_2^i)^2(m_1+k_1^i)}\bigg[\big|w_2(\zeta_1(t))\big|(\varsigma_2^{-1}(t)-t)-\int^t_{\varsigma_2(t)}\big|w_2(\zeta_1(s))\big|ds\bigg]\\
&+\displaystyle \bigg[\dfrac{2c_2^sb_1^sM_1M_2}{(m_1+k_2^i)^2}+\dfrac{c_2^sc_1^sM_2^2}{(m_1+k_2^i)^2(m_1+k_1^i)^2}+\dfrac{c_2^sa_1^sM_2}{(m_1+k_2^i)^2}\bigg]\bigg[\big|w_1(t)\big|(\varsigma_2^{-1}(t)-t)-\int^t_{\varsigma_2(t)}\big|w_1(s)\big|ds\bigg]\\
&\displaystyle
+\dfrac{c_2^sc_1^sM_1M_2^2}{(m_1+k_2^i)^2(m_1+k_1^i)^2}\bigg[\big|w_1(\varsigma_1(t))\big|(\varsigma_2^{-1}(t)-t)-\int^t_{\varsigma_2(t)}\big|w_1(\varsigma_1(s))\big|ds\bigg]
\end{aligned}
\end{equation}}
Define the Lyapunov functional by $W(t) = W_1(t) + W_2(t) $, then
\begin{equation}\label{D+W}
D^+W  (t) = D^+W_1(t) + W_2'(t).\end{equation}
Substitution of \eqref{D++W1}–\eqref{W'2} into \eqref{D+W} gives:
{\small \begin{align*}
D^+W  (t)\leq& -\bigg[b_1^i-\dfrac{c_1^sM_2}{(m_1+k_1^i)^2}-\dfrac{c_2^sM_2}{(m_1+k_2^i)^2}-\dfrac{c_1^sc_2^sM_2^2}{(m_1+k_1^i)(m_1+k_2^i)^2}(\zeta_1^{-1}(t)-t)\\
&-\bigg(\dfrac{c_1^s  a_1^sM_2}{(m_1+k_1^i)^2}+ \dfrac{2c_1^sb_1^sM_1M_2}{(m_1+k_1^i)^2}+\dfrac{(c_1^s)^2M_2^2}{(m_1+k_1^i)^3}+\dfrac{(c_1^s)^2M_1M_2^2}{(m_1+k_1^i)^4}\bigg)(\varsigma_1^{-1}(t)-t)\\
&-\bigg(\dfrac{(c_2^s)^2M_2^2}{(m_1+k_2^i)^3}+\dfrac{2c_2^sb_1^sM_1M_2}{(m_1+k_2^i)^2}+\dfrac{c_2^sc_1^sM_2^2}{(m_1+k_2^i)^2(m_1+k_1^i)^2}+\dfrac{c_2^sa_1^sM_2}{(m_1+k_2^i)^2}\bigg)(\varsigma_2^{-1}(t)-t)\bigg]
\big |w_1(t)\big|\\
&-\bigg[  \dfrac{c_2 ^i}{M_1+k_2^s}- \dfrac{c_1^s}{m_1+k_1^i}-\dfrac{c_1^s}{m_1+k_1^i}\bigg(a_2^s+\dfrac{c_2^sM_2}{m_1+k_2^i}+\dfrac{c_1^sc_2^sM_2}{(m_1+k_1^i)(u^i+k_2^i)}\bigg)(\zeta_1^{-1}(t)-t)\\
&-\dfrac{(c_1^s)^2M_1M_2}{(m_1+k_1^i)^3}(\varsigma_1^{-1}(t)-t)-\bigg(\dfrac{c_2^sa_2^s}{m_1+k_2^i}+ \dfrac{(c_2^s)^2M_2}{(m_1+k_2^i)^2}+\dfrac{(c_2^s)^2M_2}{(m_1+k_2^i)^2}\bigg)(\zeta_2^{-1}(t)-t)\\
&-\bigg(\dfrac{c_2^sM_1M_2}{(m_1+k_2^i)^2(m_1+k_1^i)}+\dfrac{c_2^sc_1^sM_1M_2^2}{(m_1+k_2^i)^2(m_1+k_1^i)^2}\bigg)(\varsigma_2^{-1}(t)-t)\bigg]
\big|w_2(t)\big|
\end{align*}}
Let's denote 
\begin{equation}\label{alpha}\begin{aligned}
\alpha(t)=&b_1^i-\dfrac{c_1^sM_2}{(m_1+k_1^i)^2}-\dfrac{c_2^sM_2}{(m_1+k_2^i)^2}-\dfrac{c_1^sc_2^sM_2^2}{(m_1+k_1^i)(m_1+k_2^i)^2}(\zeta_1^{-1}(t)-t)\\
&-\bigg(\dfrac{c_1^s a_1^sM_2}{(m_1+k_1^i)^2}+ \dfrac{2c_1^sb_1^sM_1M_2}{(m_1+k_1^i)^2}+\dfrac{(c_1^s)^2M_2^2}{(m_1+k_1^i)^3}+\dfrac{(c_1^s)^2M_1M_2^2}{(m_1+k_1^i)^4}\bigg)(\varsigma_1^{-1}(t)-t)\\
&-\bigg(\dfrac{(c_2^s)^2M_2^2}{(m_1+k_2^i)^3}+\dfrac{2c_2^sb_1^sM_1M_2}{(m_1+k_2^i)^2}+\dfrac{c_2^sc_1^sM_2^2}{(m_1+k_2^i)^2(m_1+k_1^i)^2}+\dfrac{c_2^sa_1^sM_2}{(m_1+k_2^i)^2}\bigg)(\varsigma_2^{-1}(t)-t)
\end{aligned}
\end{equation}
And 
\begin{equation}
\label{beta}\begin{aligned}
\beta(t)=& \frac{c_2 ^i}{M_2+k_2^s}- \frac{c_1^s}{m_1+k_1^i}-\frac{c_1^s}{m_1+k_1^i}\left(a_2^s+\frac{c_2^sM_2}{m_1+k_2^i}+\frac{c_1^sc_2^sM_2}{(m_1+k_1^i)(m_1+k_2^i)}\right)(\zeta_1^{-1}(t)-t)\\
&-\frac{(c_1^s)^2M_1M_2}{(m_1+k_1^i)^3}(\varsigma_1^{-1}(t)-t)-\left(\frac{c_2^sa_2^s}{m_1+k_2^i}+ \frac{(c_2^s)^2M_2}{(m_1+k_2^i)^2}+\frac{(c_2^s)^2M_2}{(m_1+k_2^i)^2}\right)(\zeta_2^{-1}(t)-t)\\
&-\left(\frac{c_2^sM_1M_2}{(m_1+k_2^i)^2(m_1+k_1^i)}+\frac{c_2^sc_1^sM_1M_2^2}{(m_1+k_2^i)^2(m_1+k_1^i)^2}\right)(\varsigma_2^{-1}(t)-t)
\end{aligned}
\end{equation}
From  hypothesis of the theorem, ones have  $\alpha^i=\liminf \alpha(t)$ and $\beta^i=\liminf \beta(t)$ verified for sufficiently large T, we obtained 
$$\begin{array}{lll}
D^+W  (t)&\leq&-\alpha^i\big|w_1(t)\big|-\beta^i\big|w_2(t)\big|<0.
\end{array}$$  
which implies $W (t)$ is non-increasing on $[T, +\infty[$. An integration of above inequality from T to t yields
$$ W (t) + \alpha^i \int^T_t |u(s)- u^*(s)|ds + \beta^i \int^T_t |v(s)-v^*(s)|ds \leq W(T) < +\infty,\quad \forall t > T.$$
Then $$\limsup_{t\rightarrow +\infty} \int^T_t |u(s)- u^*(s)|ds \leq \dfrac{W(T)}{\alpha^i} < +\infty \text{ and }\limsup_{t\rightarrow +\infty} \int^T_t |v(s)-v^*(s)|ds \leq \dfrac{W(T)}{\beta^i} < +\infty.$$
Thus we have 
$$\lim_{t\rightarrow +\infty} \big|u(t)-u^*(t)\big|=\lim_{t\rightarrow +\infty} \big|v(t)-v^*(t)\big|=0.$$
 \end{proof}
 \section{Example and Stimulation}
\hspace{1cm} In order to illustrate some feature of our main results, in this section, we will
apply our main results to some special prey-predator systems and demonstrate the efficiencies of our criteria.

\subsection{Example 1:} \hspace{1cm}In this example, we consider a system without the condition (C0). Then, $m_1=0$. The system is considered
\begin{equation}
\left \{
\begin{aligned}
u'(t)=&\bigg(0.04+0.125|\cos(\sqrt{2}t)|+0.125exp(-t)-(2.6+0.5\cos(t))u(t)\\&-\dfrac{3.2 v(t-0.75)}{u(t-0.75)+17 }\bigg)u(t);\\
v'(t)=&\bigg(0.01+0.25|\sin(\sqrt{7}t)|-\dfrac{3.5v(t-0.75)}{u(t-0.75)+3.4}\bigg)v(t), 
\end{aligned}\right.
\end{equation}
By a direct calculation, ones have the following table
\begin{center}
\begin{table}[hbtp]
\caption{The ecological parameters of $u$ and $v$.}
\begin{center}
\begin{tabular}{|p{0.5cm}|p{1cm}|p{1cm}| p{1cm}|p{1cm}|p{1cm}|p{1cm}|p{1cm}|p{1cm} |p{1cm} |p{1cm} |p{1cm}|l}
\hline
 $j$ & $a_j^i$& $a_j^s$& $b^i$& $b^s$&$c_j^i$& $c_j^s$&$k_j^i$& $k_j^s$ & $\sigma_j^s$& $\tau_j^s$\\
  \hline\hline
$1$& 0.04 & 0.29& 2.6 & 3.1 &3.2 &3.2 &17 &17 & 0.75& 0.75\\
\hline
$2$& 0.01 & 0.26 &---&---&3.5 &3.5 & 3.4& 3.4& 0.75&0.75 \\
\hline
\end{tabular}
\end{center}
\end{table}
\end{center}
And $\alpha^i\simeq 2.4$; $\beta^i\simeq 0.012$; $M_1=0.7085$; $m_1=0$; $M_2=0.6506$; and $m_2=0.0829$.
The theorem \eqref{existenceschauder} is verified and the conditions of  theorem \eqref{stabilityprey} are satisfied. Therefore, there exist  at least a pseudo almost periodic which is globally attractive.
\begin{figure}[ht]
\begin{center}
\includegraphics[width=0.5\linewidth]{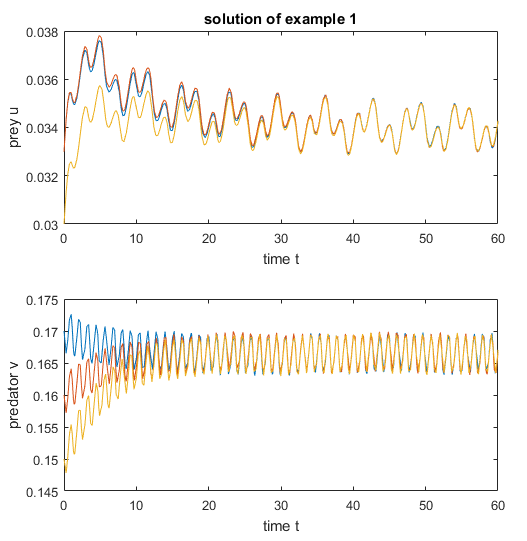}
\end{center}
\end{figure}
\begin{figure}[ht]
\begin{center}
\includegraphics[width=0.6\linewidth]{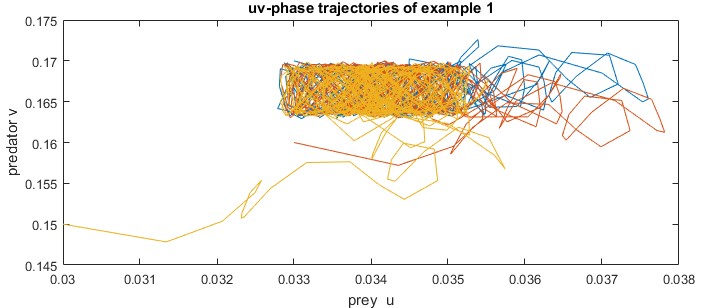}
\end{center}
\end{figure}
\subsection{Example 2:} \hspace{1cm}In this example, the condition (C0) holds. Let us consider
\begin{equation}
\left \{
\begin{aligned}
u'(t)&=\left(4.8+0.125\big(|\cos(\sqrt{2}t)|+|\cos(\sqrt{2}t)|\big)-\big(0.25|\cos(t)|\right.\\&+\left.\dfrac{33.72+32.72t^2}{4+4t^2}\big)u(t)-\dfrac{0.32 v(t-0.92)}{u(t-0.92)+16.7}\right)u(t);\\
v'(t)&=\left(0.03+0.125(|\sin(\sqrt 2t)|+|\cos(\sqrt 5t)|)-\dfrac{3.6v(t-0.92)}{u(t-0.92)+5.7}\right)v(t), 
\end{aligned}
\right. 
\end{equation}
By a direct calculation, ones have the following table
\begin{center}
\begin{table}[hptb]
\caption{The ecological parameters of $u$ and $v$.}
\begin{center}
\begin{tabular}{|p{0.5cm}|p{1cm}|p{1cm}| p{1cm}|p{1cm}|p{1cm}|p{1cm}|p{1cm}|p{1cm}|p{1cm}|p{1cm}|p{1cm}|l}
\hline
$j$ & $a_j^i$ & $a_j^s$ & $b^i$& $b^s$& $c_j^i$ & $c_j^s$ & $k_j^i$ & $k_j^s$ & $\sigma_j^s$ & $\tau_j^s$\\
  \hline\hline
$1$&4.8 & 5.05& 8.1 & 8.6 &0.32 &0.32 &16.7 &16.7 & 0.92& 0.92\\
\hline
$2$& 0.03& 0.28 &---&---&3.6 &3.6 & 5.7& 5.7& 0.92& 0.92 \\
\hline
\end{tabular}
\end{center}
\end{table}
\end{center}
And $\alpha^i\simeq 7.25$; $\beta^i\simeq 0.009$; $M_1=0.6226$; $m_1=0.5567$; $M_2=0.6403$; and $m_2=0.0408$.
The theorem \eqref{existenceschauder} is verified and the conditions of  theorem \eqref{stabilityprey} are satisfied. Therefore, there exist  at least a pseudo almost periodic which is globally attractive.
\begin{figure}[ht]
\centering
\includegraphics[width=0.5\linewidth]{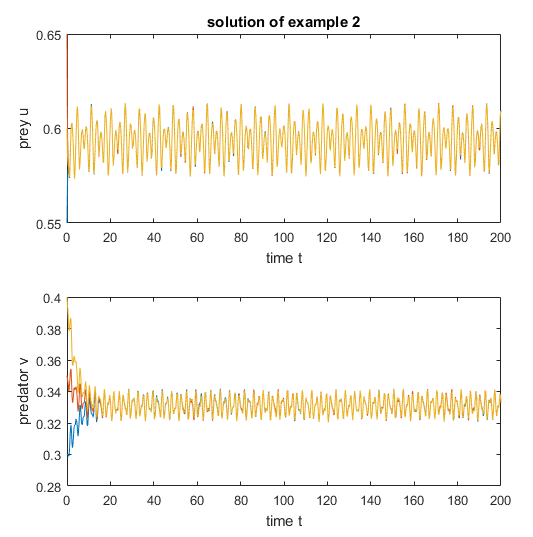}
\end{figure}

\begin{figure}[ht]
\centering
\includegraphics[width=0.6\linewidth]{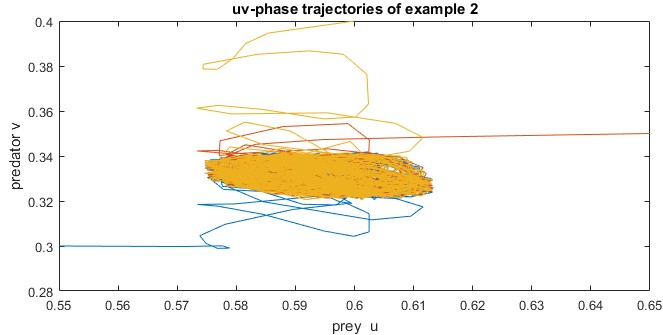}
\end{figure}
\section{Conclusion} The aim of this paper is to prove the existence of positive almost periodic solution in a Leslie-Gower predator-prey model with continuous delays. Based
on new conditions, the global attractivity of the above model is obtained by
building a suitable Lyapunov functional. Moreover, some numerical examples show  that  the our theoretical results are effective


\begin{thebibliography}{23}
\bibitem{amdouni2018pseudo} M. Amdouni and F. Chérif. The pseudo almost periodic solutions of the new class of lotka–volterra recurrent neural networks with mixed delays. Chaos, Solitons \& Fractals, 113:79–88,
2018.
\bibitem{aziz2002study} M. Aziz-Alaoui. Study of a leslie–gower-type tritrophic population model. Chaos, Solitons \& Fractals, 14(8):1275–1293, 2002.
\bibitem{aziz2003boundedness} M. Aziz-Alaoui and M. D. Okiye. Boundedness and global stability for a predator-prey model with modified leslie-gower and holling-type II schemes. Applied Mathematics Letters,
16(7):1069–1075, 2003.
\bibitem{bekolle2021attractiveness} D. Békollè, K. Ezzinbi, S. Fatajou, D. E. H. Danga, and F. M. Béssémè. Attractiveness of pseudo almost periodic solutions for delayed cellular neural networks in the context of measure theory. Neurocomputing, 435:253–263, 2021.
\bibitem{chen2007note} F. Chen, Z. Li, and Y. Huang. Note on the permanence of a competitive system with infinite
delay and feedback controls. Nonlinear Analysis: Real World Applications, 8(2):680–687, 2007.
\bibitem{chen2016positive} X. Chen and H. Ding. Positive pseudo almost periodic solutions for a hematopoiesis model. J Nonlinear Evol Equ Appl ISSN, 2016(2):25–36, 2016.
\bibitem{chuanyi2003almost}Z. Chuanyi. Almost periodic type functions and ergodicity. Springer Science \& Business Media, 2003.
\bibitem{cieutat2010composition} P. Cieutat, S. Fatajou, and G. M. N’Guérékata. Composition of pseudo almost periodic and pseudo almost automorphic functions and applications to evolution equations. Applicable Analysis, 89(1):11–27, 2010.
\bibitem{coppel2006dichotomies} W. A. Coppel. Dichotomies in stability theory, volume 629. Springer, 2006.
\bibitem{diagana2013almost} T. Diagana. Almost automorphic type and almost periodic type functions in abstract spaces. Springer, 2013.
\bibitem{gopalsamy1992stability} K. Gopalsamy. Stability and osilation in delay differential equation in delay differential equations of population dynamics, 1992.
\bibitem{kuang1993global}Y. Kuang and H. Smith. Global stability for infinite delay lotka-volterra type systems. Journal of differential equations, 103(2):221–246, 1993.
\bibitem{leslie1960properties} P. Leslie and J. Gower. The properties of a stochastic model for the predator-prey type of interaction between two species. Biometrika, 47(3/4):219–234, 1960.
\bibitem{leslie1948some} P. H. Leslie. Some further notes on the use of matrices in population mathematics. Biometrika, 35(3/4):213–245, 1948.
\bibitem{liao2018almost} Q. Liao. Almost periodic solution for a lotka-volterra predator-prey system with feedback controls on time scales. In 2018 6th International Conference on Machinery, Materials and Computing Technology (ICMMCT 2018), pages 46–51. Atlantis Press, 2018.
\bibitem{lu2015permanence} H. Lu and G. Yu. Permanence of a gilpin-ayala predator-prey system with time-dependent delay. Advances in Difference Equations, 2015(1):1–15, 2015.
\bibitem{macdonald2008biological} N. MacDonald and N. MacDonald. Biological delay systems: linear stability theory. Cambridge University Press, 2008.
\bibitem{menouer2017existence} M. Menouer, A. Moussaoui, and E. A. Dads. Existence and global asymptotic stability of positive almost periodic solution for a predator-prey system in an artificial lake. Chaos, Solitons \& Fractals, 103:271–278, 2017.
\bibitem{smart1980fixed} D. Smart. Fixed point theorems, cambridge uni. Press., Cambridge, 1980.
\bibitem{steinmuller1978pielou} K. Steinmuller. Pielou, ec: Mathematical ecology. john wiley \& sons, new york-london-sydneytoronto 1977. x, 385 s.,£13.00; 22.45, 1978.
\bibitem{tripathi2020almost} J. P. Tripathi. Almost periodic solution and global attractivity for a density dependent predatorprey system with mutual interference and crowley–martin response function. Differential Equations and Dynamical Systems, 28(1):19–37, 2020.
\bibitem{xu2019global} C. Xu, P. Li, and Y. Guo. Global asymptotical stability of almost periodic solutions for a nonautonomous competing model with time-varying delays and feedback controls. Journal of biological
dynamics, 13(1):407–421, 2019.
\bibitem{zhang1994pseudo} C. Y. Zhang. Pseudo almost periodic solutions of some differential equations. Journal of Mathematical Analysis and Applications, 181(1):62–76, 1994.

\end{thebibliography}
\end{document}